\definecolor{webgreen}{rgb}{0,.5,0}
\definecolor{webbrown}{rgb}{.6,0,0}
\newcommand{\seqnum}[1]{\href{http://oeis.org/#1}{\underline{#1}}}
\theoremstyle{plain}
\newtheorem{theorem}{Theorem}
\newtheorem{corollary}[theorem]{Corollary}
\newtheorem{lemma}[theorem]{Lemma}
\newtheorem{proposition}[theorem]{Proposition}
\theoremstyle{remark}
\begin{document}
\begin{center}
	\title[The Geometry of some Fibonacci Identities in the Hosoya Triangle]
	{The Geometry of some Fibonacci Identities in the Hosoya Triangle}
	\author{Rigoberto Fl\'orez}
	\address{Department of Mathematical Sciences\\
		The Citadel\\
		Charleston, SC \\
		U.S.A.}
	\email{rigo.florez@citadel.edu}
	\thanks{  }
	\author{Robinson A. Higuita }
	\address{Instituto de Matem\'aticas\\
	Universidad de Antioquia\\
	Medell\'in\\
	Colombia\\ }
	\email{robinson.higuita@udea.edu.co}
	\thanks{     }
	\author{Antara Mukherjee}
	\address{Department of Mathematical Sciences\\
		The Citadel\\
		Charleston, SC \\
		U.S.A.}
	\email{antara.mukherjee@citadel.edu}
\end{center}

\begin{abstract}
The \emph{Hosoya triangle} is a triangular array where every entry is a product of two Fibonacci numbers. We use the geometry of this triangle to 
find new identities related to Fibonacci numbers. We give geometric interpretation for some well-known identities of Fibonacci numbers. For instance, 
 the Cassini identity and the Catalan identity.  We also extend some identities that hold in the Pascal triangle to the Hosoya triangle.  For example, the  
 hockey stick extends from binomials to products of Fibonacci numbers and the rhombus property extends a binomial identity from the Pascal triangle  
 to an identity of products of Fibonacci numbers in the Hosoya triangle.   
\end{abstract}

\maketitle
\today

\section {Introduction}

The \emph{Hosoya triangle}, denoted by $\mathcal{H}$, is a triangular array where every entry is a product of  two Fibonacci numbers (see   \cite{BlairRigoAntaraHoneycombs, Blair, BlairRigoAntara, BlairRigoAntaraGP, Ching, florezHiguitaJunesGCD, florezjunes, hosoya, koshy}).
Figure \ref{HosoyaTriangleIN} Part (a) shows that the fourth entry in row eight is the product $F_5$ and $F_4$. 

These types of triangles are ideal to provide geometric interpretation of Fibonacci number identities. For instance, in this paper we give geometric  
interpretation for the well-known identities: the Cassini identity and the Catalan identity.  For example, 
Figure \ref{HosoyaTriangleIN} Part (b) depicts some examples of Cassini identity $F_{n-1}F_{n+1}-F_{n}^2=(-1)^n$. From Part (b) we can see that 
$ F_{5}F_{3}-F_{4}^2=10-9=(-1)^4$. Similarly, we can represent geometrically the Catalan identity $F_{n-r}F_{n+r}-F_{n}^2=(-1)^{n-r-1}F_{r}^2$ 
(see Figure \ref{CasiniCatalan}).

\begin{figure}[!ht]
	\centering
	\includegraphics[scale=0.7]{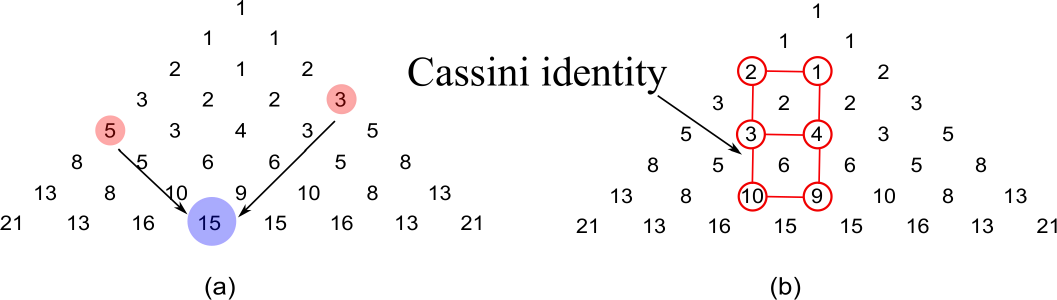}
	\caption{Hosoya triangle.} \label{HosoyaTriangleIN}
\end{figure}
In this paper we use the geometry of the Hosoya triangle to discover new Fibonacci identities and also we give geometric interpretations of some  
well-known identities. Since the proofs here  are based on the geometry of the triangle, they are simple and easy to understand. For example,  
in this paper we use the geometric representation of the product of Fibonacci numbers in the triangle to prove these identities. 
\begin{itemize}
\item  If  $a+b=c+d$ and $a\le b$, then 
$$\sum_{j=0}^{2k-1} F_{a+j}F_{b+j}=\sum_{j=0}^{2k-1} F_{c+j}F_{d+j}= F_{b+2k}F_{b+2k-1}-F_{b}F_{a-1},$$ 

\item if $m$ is a positive integer, then 
\[
\sum_{k=1}^{l} \frac{F_{m-k}+(-1)^kF_{m+k}}{F_{m}}=
\begin{cases}
-(F_{l}+F_{l-2}-1),& \mbox{ if  $l$ is odd;}\\
5F_{l-1}F_{l}+1+(-1)^{l+1}, & \mbox{if $l$ is even}
\end{cases}
\]
\end{itemize}
The first of the two identities above is a generalization of this identity  (see \cite{koshy})
$$\displaystyle\sum_{j=0}^{2k-1} F_{j+1}^2 = F_{2k+1}F_{2k}.$$ 
 
From \cite{BlairRigoAntaraHoneycombs, Blair, BlairRigoAntara, BlairRigoAntaraGP, florezHiguitaJunesGCD, florezjunes} we have observed that some  
properties that hold in the Pascal triangle also extend to other triangles including the Hosoya triangle.  In this paper we use the geometric  
representation, in the Pascal triangle (see \cite{Green}), of some binomial identities to discover new identities of Fibonacci numbers. For instance,  
the hockey stick property is one of the well-known properties that we successfully extend to the Hosoya triangle. The T-stick property in  the 
Pascal triangle gives rise to a triangular property here. In this paper we also study some other geometric
properties that the Hosoya triangle has. For example we give geometric proofs of the Cassini, Catalan, and Johnson identities.

We have found that if a rectangle is  given in a Hosoya triangle, then the differences of two of its corners points is equal to the
difference of the remaining corners points. This fundamental property allows us to have geometrical proofs of several identities.

The symmetry present in the Hosoya triangle helps us to explore several patterns, and many identities.
The rectangle property gives rise to other geometrical configurations and therefore, more identities associated with those
configurations.

\section{The Hosoya triangle and its coordinate system} \label{CoordinateSystem}

The construction presented in this section can be found in articles by Fl\'orez \emph{et al.} \cite{florezHiguitaJunesGCD} and Hosoya
\cite{hosoya}. Other similar constructions are also presented in Koshy \cite{koshy} or in \cite{BlairRigoAntaraHoneycombs, BlairRigoAntaraGP, BlairRigoAntara, Blair, Ching, florezHiguitaMukherjee, florezjunes}.
The \emph{Hosoya sequence} $\left\{H(r,k)\right\}_{r,k\ge 0}$ is defined using the double recursion
\[ H(r,k)= H(r-1,k)+H(r-2,k)  \; \text{ and } \;
 H(r,k)= H(r-1,k-1)+H(r-2,k-2),\]
with initial conditions
$H(0,0)=0; \quad H(1,0)=0; \quad H(1,1)=0; \quad H(2,1)=1,$
where $ r>1 $ and $0\le k \le r-1$. This sequence gives rise to the \emph{Hosoya triangle}, where the entry in position $k$
(taken from left to right) of the $r${th} row is equal to $H(r,k)$ (see Tables \ref{tabla1} and \ref{CasiniCatalan}, and Sloane
\cite{sloane} at \seqnum{A058071}). For simplicity in this paper we use $\mathcal{H}$ to denote the Hosoya triangle.

\begin{table} [!ht] \small
\centering
\addtolength{\tabcolsep}{-3pt} \scalebox{.85}{
\begin{tabular}{ccccccccccccc}
&&&&&&                                                               $H(0,0)$                                                 &&&&&&\\
&&&&&                                                  $H(1,0)$     &&     $H(1,1)$                                        &&&&&\\
&&&&                                         $H(2,0)$    &&     $H(2,1)$     &&     $H(2,2)$                               &&&&\\
&&&                                 $H(3,0)$   &&     $H(3,1)$     &&     $H(3,2)$      &&    $H(3,3)$                     &&&\\
&&                        $H(4,0)$     &&     $H(4,1)$    &&     $H(4,2)$     &&     $H(4,3)$     &&     $H(4,4)$             &&\\
&            $H(5,0)$     &&    $H(5,1)$    &&     $H(5,2)$     &&     $H(5,3)$      &&    $H(5,4)$     &&    $H(5,5)$    & \\
$H(6,0)$     &&    $H(6,1)$    &&     $H(6,2)$     &&     $H(6,3)$      &&    $H(6,4)$     &&    $H(6,5)$    &&    $H(6,6)$     \\
\end{tabular}}
\caption{Hosoya triangle  $\mathcal{H}$.} \label{tabla1}
\end{table}

\begin{proposition}[\cite{hosoya,koshy}]\label{lemma0} $H(r,k)= F_kF_{r-k}$.
\end{proposition}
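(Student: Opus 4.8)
The plan is to prove $H(r,k) = F_k F_{r-k}$ by induction. Since the Hosoya sequence is defined by a double recursion with four initial conditions, I would first verify that the proposed closed form $F_k F_{r-k}$ satisfies both recurrences and matches all four base values, and then argue that these conditions uniquely determine the entire triangle.

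First I would check the initial conditions directly using the convention $F_0 = 0$, $F_1 = 1$, $F_2 = 1$. We have $H(0,0) = F_0 F_0 = 0$, $H(1,0) = F_0 F_1 = 0$, $H(1,1) = F_1 F_0 = 0$, and $H(2,1) = F_1 F_1 = 1$, so all four base cases agree with the formula. Next I would verify that $F_k F_{r-k}$ respects the two defining recursions. For the first recursion, fixing the column index $k$ and using the ordinary Fibonacci recurrence on the row-dependent factor,
\begin{align*}
H(r-1,k) + H(r-2,k) &= F_k F_{r-1-k} + F_k F_{r-2-k} \\
&= F_k\left(F_{r-1-k} + F_{r-2-k}\right) = F_k F_{r-k},
\end{align*}
which equals $H(r,k)$. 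For the second recursion, both the row and column indices drop, so I would compute $F_{k-1} F_{r-1-(k-1)} + F_{k-2} F_{r-2-(k-2)} = F_{k-1} F_{r-k} + F_{k-2} F_{r-k} = F_{r-k}(F_{k-1} + F_{k-2}) = F_{r-k} F_k$, again recovering $H(r,k)$. Thus the closed form is consistent with both recurrences.

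The main subtlety, and the step I expect to require the most care, is the uniqueness argument: I must show that the two recurrences together with the four initial conditions actually pin down every entry $H(r,k)$, so that agreeing on the base cases and propagating via either recurrence forces the formula everywhere. The cleanest route is a double induction on $r$ (and, for the interior entries within a row, on $k$), showing that every entry in row $r$ can be reached from earlier rows through one of the two recursions while the left and right boundary entries $H(r,0) = F_0 F_r = 0$ and $H(r,r) = F_r F_0 = 0$ are determined by the vanishing column factor. I would therefore take as inductive hypothesis that $H(r',k') = F_{k'} F_{r'-k'}$ for all rows $r' < r$, and then use the first recursion to establish the formula for every entry of row $r$, invoking the second recursion only where needed to reconcile the boundary and to confirm internal consistency. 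Care is needed to ensure the recursions never produce conflicting values, but the computation above shows both give the same closed form, so no contradiction can arise.
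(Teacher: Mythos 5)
The paper itself gives no proof of this proposition: it is stated as a known result with a citation to Hosoya and Koshy, so there is no internal argument to compare yours against. Your proposal is correct and is essentially the standard textbook proof (it is the one found in Koshy): verify that $F_kF_{r-k}$ matches the four initial values and satisfies both defining recursions, then induct on the row index. Your computations for the base cases and both recurrences are right. Two details are worth tightening in the uniqueness step. First, the first recursion cannot by itself establish every entry of row $r$: for $k=r-1$ and $k=r$ it refers to $H(r-2,k)$ with $k>r-2$, which lies outside the triangle, so the second recursion is genuinely required there (symmetrically, the second recursion fails for $k=0$, where it would need a negative column index); the correct statement is that for every entry with $r\ge 2$ other than $H(2,1)$, at least one of the two recursions has all its referenced indices inside the triangle. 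Second, the boundary entries are not ``determined by the vanishing column factor'' of the formula but by the recursions themselves: $H(r,0)$ is forced by the first recursion from $H(0,0)=H(1,0)=0$, and $H(r,r)$ by the second from $H(0,0)=H(1,1)=0$. Note also that $H(2,1)$ must appear as an initial condition precisely because neither recursion applies to it with in-range indices; your base-case list includes it, so nothing is missing. With these clarifications, strong induction on $r$ works cleanly: since the closed form satisfies whichever recursion is applicable at each entry, every entry is forced to equal $F_kF_{r-k}$, and no separate consistency check is needed because $H$ is given as a well-defined sequence and each entry only needs one valid recursion to pin down its value.
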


Proposition \ref{lemma0} (represented in Figure \ref{HosoyaTriangleIN} Part (a)) gives rise to another coordinate system
(see also Fl\'orez \emph{et al.} \cite{florezHiguitaJunesGCD, florezjunes}). If $P$ is a point in  $\mathcal{H}$,
then it is clear that there are two unique positive integers $r$ and $k$ such that
$P=H(r,k)$ with $k \leq r$. From $H(r,k)=F_{k}F_{r-k}$ it is easy to
see that an $n$th \emph{diagonal} in $\mathcal{H}$ is the collection of all
Fibonacci numbers multiplied by $F_n(x)$.  For example, from Table \ref{tabla1} we can see that
the diagonal  $H(4,1)$,  $H(5,2)$, $H(6,3)$,  $H(7,4)$, $H(8,5)$,  $H(9,6), \dots$ is
equal to the diagonal $2, 2, 4, 6, 10, 16, \dots$ in Figure  \ref{CasiniCatalan}
which results from multiplying the Fibonacci sequences by $F_3=2$.

\section{Geometric properties in the Hosoya triangle}
A parallel configuration of points in the Hosoya triangle is called a \emph{ladder configuration},
for simplicity we are going to refer to this as a ladder. A \emph{rung} is the set of points on a
line intersecting both parallel (the right-up) configurations of the ladder. See Figures \ref{proofLemma2}(a),
\ref{ProofTheorem3Part1}, \ref{ProofTheorem1Part5Part6}, and \ref{ConfigurationGeneralizedFibonacci}.
The \textit{length} of the rung is the difference of its end points. The \textit{absolute length} of a rung is the absolute value of its length.

In this section we use the ladder configuration to explore geometric and algebraic properties in the
Hosoya triangle. The properties here in this paper can be easily extended to the Hosoya polynomial triangle
(see Fl\'orez \emph{et al}. \cite{florezHiguitaMukherjee}).

We first prove a lemma that will be helpful in proving several results in this paper.  
If  $L$ is a horizontal ladder in $\mathcal{H}$ where its rungs have exactly two points, then a rung sum is
a Fibonacci number and it is the same for every rung. The identity (8) in Vajda \cite{Vajda} states that 
$G_{n+m}=F_{m-1}G_n+F_mG_{n+1}$, where $G_i$ is any generalized Fibonacci sequence. This lemma gives a 
geometric interpretation of the identity in Vajda when $m=j+1$, $n=k-j$ and $G_{n}=F_{n}$.

\begin{lemma} \label{ladder:two:points:rung}
If $k,j \in \mathbb{Z}_{>0}$ and $j+i+1\le k$ , then in $\mathcal{H}$ this holds 
$$F_{j}F_{k-j}+F_{j+1}F_{k-j+1}=F_{j+i}F_{k-j-i}+F_{j+i+1}F_{k-j-i+1}=F_{k+1}.$$

Equivalently, $H(k,j)+H(k+2,j+1)=H(k,j+i)+H(k+2,j+i+1)=F_{k+1}.$

\end{lemma}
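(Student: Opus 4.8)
The plan is to translate the statement entirely into Fibonacci numbers by means of Proposition \ref{lemma0}, and then to reduce the whole lemma to the single identity
\[
F_{j}F_{k-j}+F_{j+1}F_{k-j+1}=F_{k+1},
\]
valid for every admissible $j$. Writing $S(j):=H(k,j)+H(k+2,j+1)=F_jF_{k-j}+F_{j+1}F_{k-j+1}$, the two claimed equalities are exactly the assertions that $S(j)$ does not depend on $j$ (the ``same for every rung'' part) and that its common value is $F_{k+1}$. Geometrically, the rung joining $H(k,j)$ to $H(k+2,j+1)$ is a vertical segment of the horizontal ladder formed by rows $k$ and $k+2$, and I will prove the identity by sliding this rung one step to the right.

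For the anchor of the slide I would use the rung sitting on the left edge of $\mathcal{H}$, namely $j=0$. There $H(k,0)=F_0F_k=0$ and $H(k+2,1)=F_1F_{k+1}=F_{k+1}$, so $S(0)=F_{k+1}$ with no computation. This pins down the common value and supplies the base case.

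The geometric heart is the invariance $S(j)=S(j+1)$. The four points $H(k,j)$, $H(k,j+1)$, $H(k+2,j+1)$, $H(k+2,j+2)$ occupy the same two horizontal positions in rows $k$ and $k+2$, so they are the corners of a rectangle in $\mathcal{H}$, and $S(j)=S(j+1)$ is precisely the statement that the sum of its two left corners equals the sum of its two right corners. Rewriting this as
\[
H(k,j+1)-H(k,j)=H(k+2,j+1)-H(k+2,j+2)
\]
and passing to Fibonacci numbers, the difference $S(j+1)-S(j)$ collapses after factoring to
\[
F_{k-j}\bigl(F_{j+2}-F_j\bigr)-F_{j+1}\bigl(F_{k-j+1}-F_{k-j-1}\bigr)=F_{k-j}F_{j+1}-F_{j+1}F_{k-j}=0,
\]
using only $F_{n+1}-F_{n-1}=F_n$. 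Iterating the slide carries $S(0)$ to any $S(i)$ and, more generally, $S(j)$ to $S(j+i)$, which yields both equalities simultaneously.

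I expect the main obstacle to be organizational rather than computational: phrasing the invariance step so that it reads as a genuine property of the rectangle in $\mathcal{H}$ rather than a bare Fibonacci manipulation, and keeping the range $j+i+1\le k$ consistent so that every point invoked actually lies in the triangle. I will also record the available shortcut: applying the addition formula $F_{m+n}=F_{m-1}F_n+F_mF_{n+1}$ with $m=j+1$ and $n=k-j$ gives $S(j)=F_{k+1}$ directly for every $j$, proving both the constancy and the value in one line; the sliding argument is the version that exhibits the identity as the visual ``rung sum'' phenomenon around which the section is organized.
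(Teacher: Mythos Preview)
Your proposal is correct and follows the same three--step skeleton as the paper: establish the invariance $S(j)=S(j+1)$ for consecutive rungs, anchor at the left edge where $H(k,0)=0$ and $H(k+2,1)=F_{k+1}$, and iterate. The only real difference is how the invariance step is justified: the paper stays inside $\mathcal{H}$ and notes that the two diagonals of the little square meet at the inner point $p=H(k+1,j+1)$, so each of the two defining recursions of $\mathcal{H}$ gives one diagonal difference equal to $p$, whence the two diagonal differences coincide and $S(j)=S(j+1)$; you instead pass through Proposition~\ref{lemma0} and cancel via $F_{n+1}-F_{n-1}=F_n$. The paper explicitly flags your route as the ``alternate (technical) proof'' immediately after its own argument, so the two are really the same idea in different coordinates. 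Your addition--formula shortcut at the end is a clean one--line alternative not spelled out in the paper.
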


\begin{proof}
First, we take two consecutive rungs of $L$ forming a square (see Figure  \ref{proofLemma2}(a)).
Observe that each diagonal (slash and backslash) of this square has three points ---two corner points
and one inner point. Those two diagonals intersect in the inner point $p$. From the recursive definition
of the entries of $\mathcal{H}$ and the point $p$, it is easy to see that the difference of the
two corner points of the backslash diagonal of the square is equal to the difference of the corner points of
the slash diagonal of the square. This implies that sum of the points in any two consecutive rungs have the same
value. Using an inductive argument we can extend the result for any two arbitrary rungs.
Since this is true for any rung of $L$, it is true for the first rung on the left where
one of the two points is zero and the other is the Fibonacci number $F_{k+1}=H(k+2,1)$.
\end{proof}

An alternate (technical) proof can be found using the recursive definition of the Hosoya
triangle and first proving that  $H(r,k)+H(r+2,k+1)=H(r,k+1)+H(r+2,k+2)$.

All horizontal rungs in a vertical ladder in $\mathcal{H}$ have the same length except by the order of their measure (see Figures \ref{proofLemma2}(b) 
and \ref{ProofTheorem3Part1}). This result is formally stated in Proposition \ref{PropiedadDelRectangle}.

\begin{figure}[!ht]
	\centering
	\includegraphics[width=7.8cm]{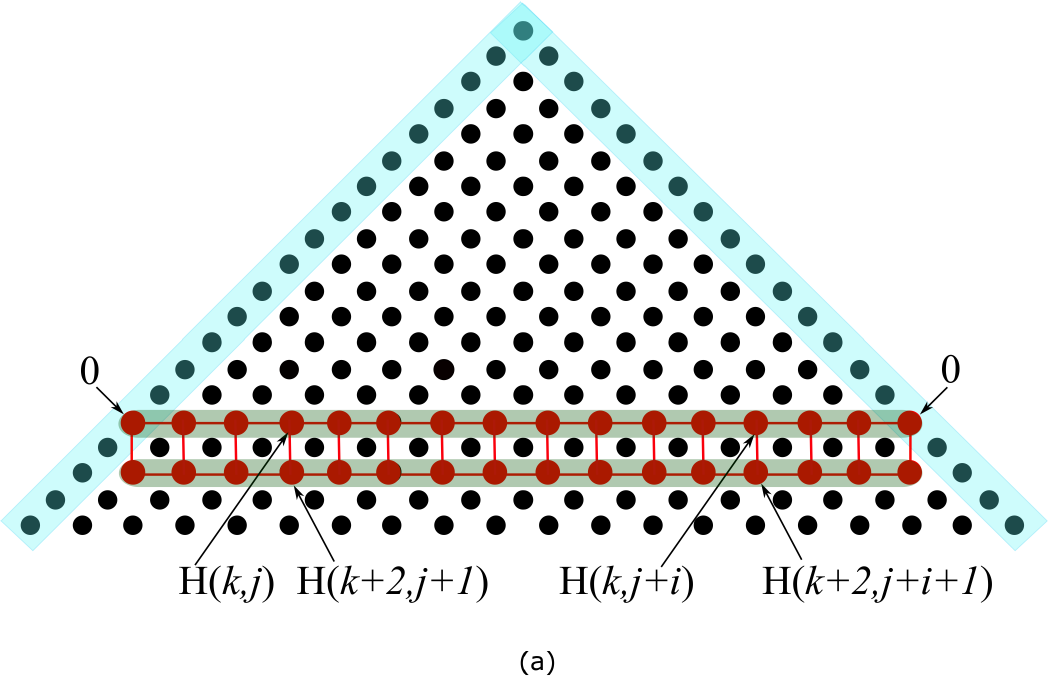}\hspace{0.25cm}	\includegraphics[width=7.8cm]{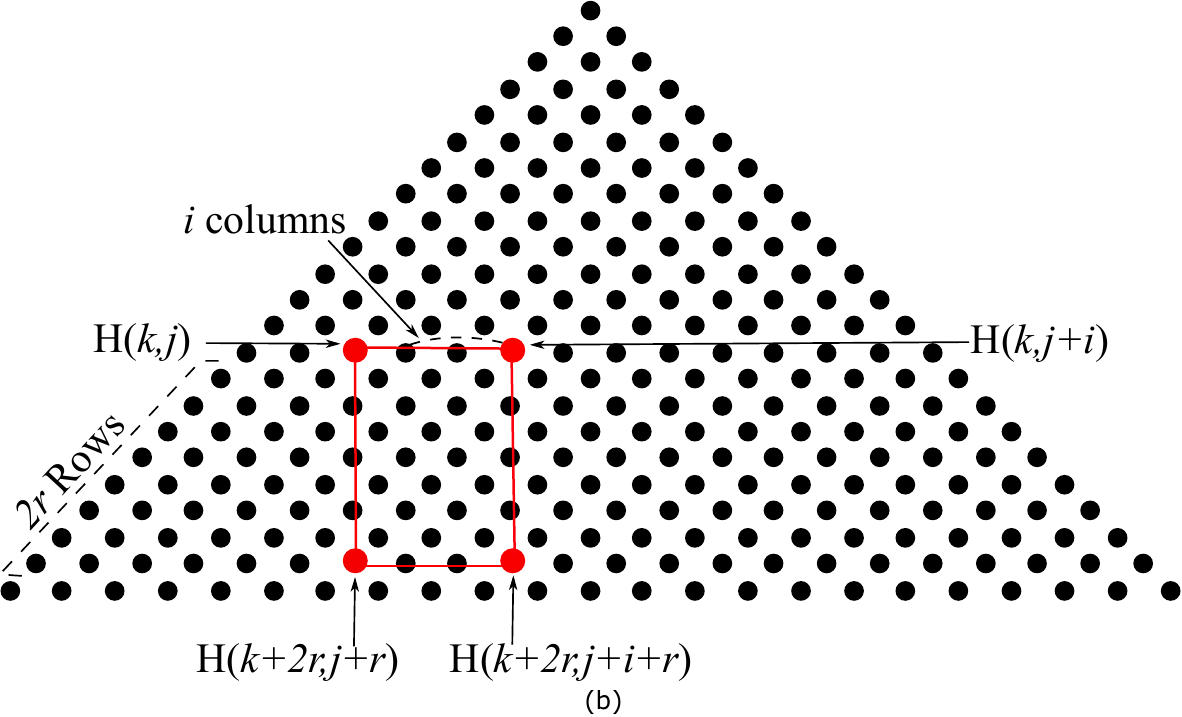}
	\caption{Rectangle property and the rung sum of a ladder.}\label{proofLemma2}
\end{figure}

\begin{proposition}[Rectangle Property]\label{PropiedadDelRectangle}
 In $\mathcal{H}$ it holds that
 $$F_{j}F_{k-j}-F_{j+i}F_{k-j-i}=(-1)^{r}(F_{j+r}F_{k+r-j}-F_{j+i+r}F_{k+r-j-i})=(-1)^{j+1}F_{i}F_{k-2j-i}.$$
 
 Equivalently,
		\[H(k,j)-H(k,j+i)=(-1)^{r}(H(k+2r,j+r)-H(k+2r,j+i+r))=(-1)^{j+1}H(k-2j,i).\]

\end{proposition}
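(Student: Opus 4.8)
The plan is to reduce both equalities to a single elementary relation among four neighbouring entries of $\mathcal{H}$ and then propagate it by telescoping and induction, exactly in the spirit of the proof of Lemma \ref{ladder:two:points:rung}. The atom of the argument is the observation that, inside any unit square of $\mathcal{H}$ with corners $H(k,s),H(k,s+1),H(k+2,s+1),H(k+2,s+2)$ and inner (crossing) point $p=H(k+1,s+1)$, the two recursions defining $\mathcal{H}$ give $H(k+2,s+2)-H(k,s)=p=H(k+2,s+1)-H(k,s+1)$. Rearranging yields the horizontal-difference transfer
\[
H(k,s+1)-H(k,s)=-\bigl(H(k+2,s+2)-H(k+2,s+1)\bigr),
\]
i.e.\ a horizontal difference in row $k$ equals the negative of the corresponding horizontal difference in row $k+2$ once the columns are shifted by one. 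This is the same inner-point computation already used for the rung sum, now applied to differences rather than to sums.

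Next I would derive the middle equality. Writing $H(k,j)-H(k,j+i)$ as the telescoping sum $\sum_{s=0}^{i-1}\bigl(H(k,j+s)-H(k,j+s+1)\bigr)$ and applying the transfer relation termwise collapses the row-$k$ difference into the row-$(k+2)$ difference $H(k+2,j+1)-H(k+2,j+i+1)$, up to one sign. Iterating this $r$ times (a straightforward induction on $r$) carries the top rung of the rectangle down to row $k+2r$ while shifting its columns by $r$, each two-row descent contributing a factor $-1$; this produces the stated relation between the top rung difference $H(k,j)-H(k,j+i)$ and the bottom rung difference $H(k+2r,j+r)-H(k+2r,j+i+r)$, with the parity factor recorded by the number of descents.

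To obtain the closed form, I would run the same descent in the direction that drives the left column of the rectangle to the boundary. Sliding the configuration upward by $j$ steps places its left corners in column $0$, where $H(m,0)=F_0F_m=0$ for every $m$; the corresponding rung difference then degenerates to $0-H(k-2j,i)=-H(k-2j,i)$. Combining this with the sign accumulated over the $j$ ascents evaluates the common difference as $\pm H(k-2j,i)=\pm F_iF_{k-2j-i}$, which is the last equality and simultaneously pins down the parity factor from the boundary column.

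The two telescoping/induction steps are routine; the one place that needs care is the atom itself, namely checking that the two diagonals of the unit square meet at $p=H(k+1,s+1)$ and that the two recursions give the two evaluations of $p$ with the correct orientation, since it is precisely the orientation of these diagonals that produces the alternating sign, and getting that orientation right is what governs the parity exponent. As an independent check (and an alternate, purely algebraic route matching the remark after Lemma \ref{ladder:two:points:rung}), one may instead invoke Proposition \ref{lemma0} to rewrite every entry as a product of Fibonacci numbers and apply the Vajda-type identity $F_aF_b-F_{a+t}F_{b-t}=(-1)^{a+1}F_tF_{b-a-t}$, valid whenever $a+b$ is held fixed, which drops out of Binet's formula once the $\phi^{a+b}$ and $\psi^{a+b}$ terms cancel; applying it to the top rung ($a=j$) and to the bottom rung ($a=j+r$) yields both equalities at once.
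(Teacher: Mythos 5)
Your proposal is correct, and it follows essentially the same route as the paper: your ``atom'' --- the inner-point computation $H(k+2,s+2)-H(k,s)=H(k+1,s+1)=H(k+2,s+1)-H(k,s+1)$ --- is exactly the argument by which the paper proves Lemma \ref{ladder:two:points:rung}, whose conclusion $H(k,j)+H(k+2,j+1)=H(k,j+i)+H(k+2,j+i+1)$ is your transfer relation rearranged, and the paper then, like you, iterates this down the ladder and evaluates at the boundary column (where $H(m,0)=F_0F_m=0$) to produce $H(k-2j,i)$. The one substantive difference is care about signs: the paper's proof only asserts $|a_0-b_0|=|a_1-b_1|=\cdots=|a_i-b_i|$ and never determines the parity factor, while your bookkeeping pins it down --- and it disagrees with the printed statement. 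A single two-row descent flips the sign once, so after $r$ descents the factor is $(-1)^{r}$, and climbing the $j$ steps to the boundary column gives the closed form with sign $(-1)^{j+1}$; the proposition instead prints $(-1)^{r+1}$ in both places, which cannot be literally correct, since the leftmost member $H(k,j)-H(k,j+i)$ is independent of $r$ while $(-1)^{r+1}H(k-2j,i)$ is not (e.g.\ $k=4$, $j=i=r=1$ gives $H(4,1)-H(4,2)=1$ but $(-1)^{r+1}\left(H(6,2)-H(6,3)\right)=-1$). Your Vajda-type check (applied with $a=j$ on the top rung and $a=j+r$ on the bottom rung) confirms the corrected exponents, so your argument proves the right version of the identity and exposes a sign typo that the paper's absolute-value proof glosses over; your algebraic alternative also parallels the remark the paper makes after Lemma \ref{ladder:two:points:rung} that a ``technical'' proof straight from the recursion is available.
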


\begin{proof}
From Figure \ref{ProofTheorem3Part1} and
Lemma \ref{ladder:two:points:rung} we can observe that
\[|a_{0}-b_{0}| =|a_{1}-b_{1}|=|a_{2}-b_{2}|= \dots=|a_{i}-b_{i}|.\]
From Figure \ref{ProofTheorem3Part1} we can see that, in particular, if we take $a_{0}=0$, then $b_{0}=H(k-2j,i)$.
\end{proof}

\begin{figure}[!ht]
	\centering
	\includegraphics[width=10cm]{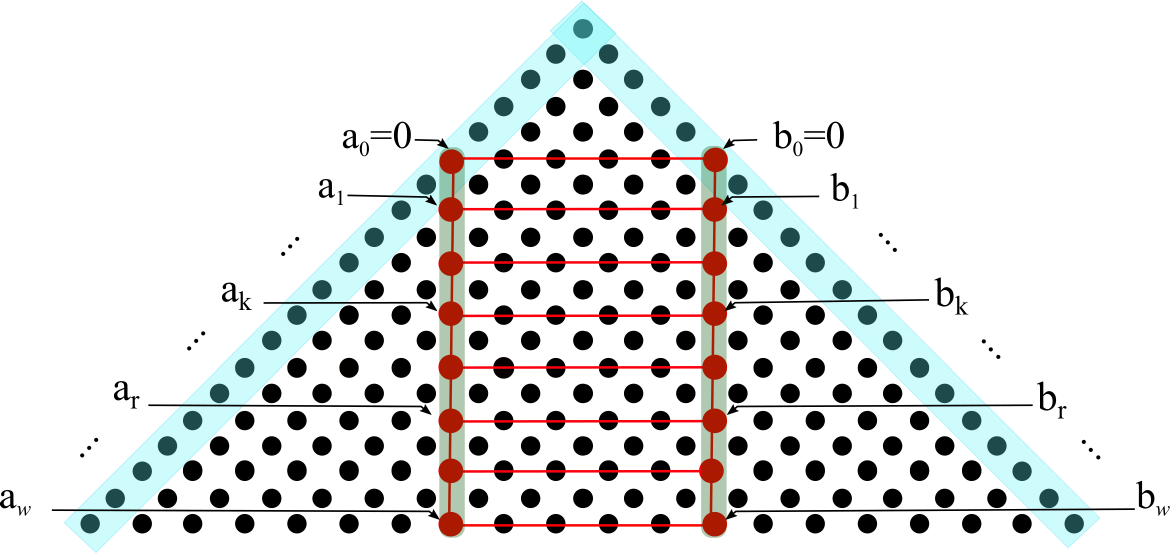}
	\caption{Every rung in any vertical ladder in $\mathcal{H}$ has the same length.}\label{ProofTheorem3Part1}
\end{figure}

The following result provides several identities in the Hosoya triangle. In particular it shows that the alternating sum of the points in a horizontal  
rung of a vertical ladder and the sum of the points in a vertical rungs of a horizontal ladder is a constant provided that the rung has even number  
of points in each case. We also see that the absolute length of each rung in a horizontal ladder is the same if there are an odd number of points  
in the rungs. Finally, if the ladders are oblique (see Figure \ref{ProofTheorem1Part5Part6}), then the absolute length of each rung is the absolute  
length of the first rung multiplied by a Fibonacci number and the sum of the points in the oblique rungs equals the sum of the points in the second  
rung multiplied by a Fibonacci number.

We may also use Proposition \ref{lemma0} to give an algebraic reinterpretation of the results  mentioned above in terms
of Fibonacci numbers.

\begin{theorem}\label{Propiedad:del:cuadrado} In the Hosoya triangle $\mathcal{H}$ these hold,
	\begin{enumerate}[(1)]
		 \item   If $r, k>0$, $j\ge 0$ and $0<2n-1\le k$ for some $n$, then
	  \[\left | \sum_{t=0}^{2n-1} (-1)^{t}H(k,j+t)\right |=\left | \sum_{t=0}^{2n-1} (-1)^{i}H(k+2r,j+t+r)\right |.\]
	
	  Equivalently, $\left | \sum_{t=0}^{2n-1} (-1)^{t}F_{j+t}F_{k-j-t}\right |=\left | \sum_{t=0}^{2n-1} (-1)^{i}F_{j+r+t}F_{k+r-j-t}\right | .$

		\item \label{Propiedad:del:cuadradoPart2}  If  $m, k>0$, $j\ge 0$ and $0<2n-1\le k$ for some $n$, then 
				\[ \sum_{t=0}^{2n-1} H(k+2t,j+t)=\sum_{t=0}^{2n-1} H(k+2t,j+m+t).\]
			
				Equivalently, $\sum_{t=0}^{2n-1} F_{j+t}F_{k+t-j}=\sum_{t=0}^{2n-1} F_{j+m+t}F_{k+t-j-m}.$

		\item  If $i$ is a positive even number, then 
		\[H(k+2i,j+i)-H(k,j)=H(k+2i,j+n+i)-H(k,j+n)=H(k+2i,i).\]
		
		Equivalently,
		$
		\det \left[\begin{array}{ll}
		F_{j+i} & F_{j} \\
		F_{k-j} & F_{k+i-j}
		\end{array}\right]
		=
		\det \left[\begin{array}{ll}
		F_{j+n+i} & F_{j+n} \\
		F_{k-j-n} & F_{k+i-j-n}
		\end{array}\right]
		=
		\det \left[\begin{array}{ll}
		F_{i} & 0\\
		0 & F_{k+i}
		\end{array}\right].
		$	
		
		\item  If $r, k$, and $j$ are positive integers with $r\ge j$, then
		\[
		H(r+k,j)-H(r,j)=F_j( H(r+k-j+1,1)-H(r-j+1,1) ).
		\]
		
		Equivalently,
		$
		\det \left[\begin{array}{ll}
		F_{j} & F_{r-j} \\
		 F_{j} &F_{r+k-j} 
		\end{array}\right]
		= F_{j}\left(F_{r+k-j}-F_{r-j} \right).
		$
            \item   If $i,j$, and $k$ are positive integers, then
		\[
		\sum_{i=0}^{m} H(k+i,j+i)=F_{k-j}\sum_{i=0}^{m} H(j+i+1,1).
		\]		
  \end{enumerate}	
\end{theorem}

\begin{proof}
We prove Part (1) for two consecutive rungs. The general case follows easily
using an inductive argument, so we omit it. From Figure \ref{ProofTheorem3:Part2}
and Lemma \ref{ladder:two:points:rung} we have
\[c_{1}-c_{0}=d_{0}-d_{1}; \quad c_{3}-c_{2}= d_{2}-d_{3}; \quad \dots \quad c_{2i-1}-c_{2i-2}= d_{2i-2}-d_{2i-1}.\] 
 This implies that
$| \sum_{i=0}^{2n-1} (-1)^{i} c_{i}|=|\sum_{i=0}^{2n-1} (-1)^{i} d_{i} |$.

\begin{figure}[!ht]
	\centering
	\includegraphics[width=11cm]{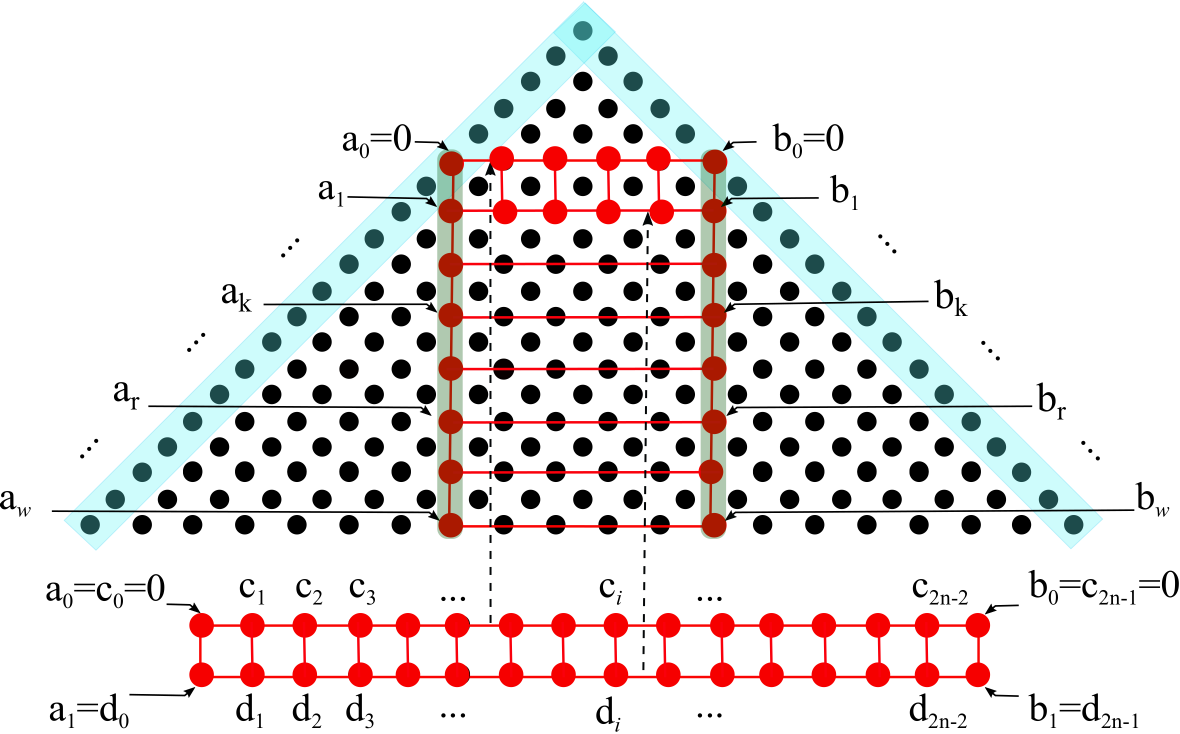}
	\caption{The alternating sum of the points in a rung is the same.}\label{ProofTheorem3:Part2}
\end{figure}

Proof of Part (2). From Figure \ref{Proof:Theorem3:Part3} and Lemma \ref{ladder:two:points:rung}
we have
\[c_{0}+c_{1}=d_{0}+d_{1};\quad c_{2}+c_{3}=d_{2}+d_{3};  \quad \dots \quad c_{2n-2}+c_{2n-1}=d_{2n-2}+d_{2n-1}.\]
This leads to the conclusion.

\begin{figure}[!ht]
	\centering
	\includegraphics[width=14cm]{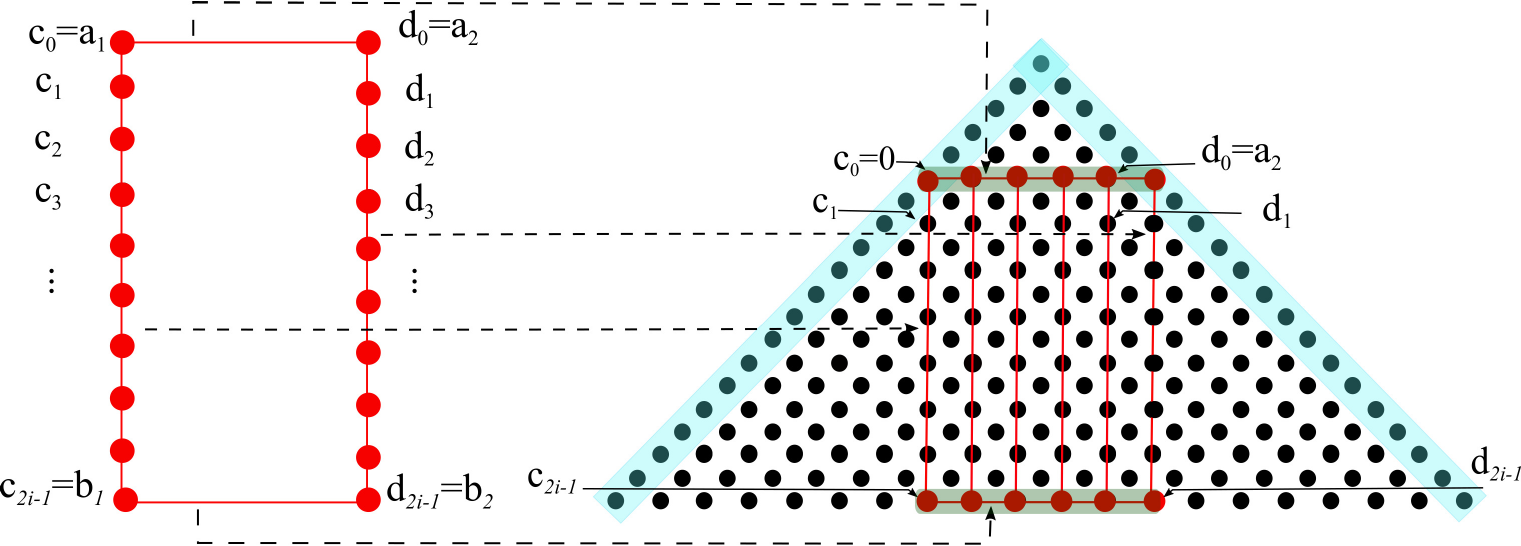}
	\caption{Rungs with even number of points have the same sum.}\label{Proof:Theorem3:Part3}
\end{figure}

Proof of Part (3). Let $a_0$ and $a_1$ be the points of the first rung of the ladder and let $b_0$ and $b_1$ be the points of the last rung of the ladder, as in Figure \ref{Proof:Theorem3Part4}. We assume that the ladder has an odd number of rungs. 
We define $x$ as the sum of all points between $c_{0}$ and $d_{0}$ (left-hand side points), including both of them, and let $y$ 
be the sum of all points between $c_{1}$ and $d_{1}$ (right-hand side points), including both of them. From Part \eqref{Propiedad:del:cuadradoPart2} 
$a_{0}+x =a_{1}+y \; \text{ and } \; b_{0}+x = b_{1}+y.$
From this it is easy to see that $a_{1}-a_{0}=x-y=b_{1}-b_{0}$. If, in particular, we take $a_{0}=H(k,0)$, we have $b_{0}=H(k+2i,i)$.

\begin{figure}[!ht]
	\centering
	\includegraphics[width=14cm]{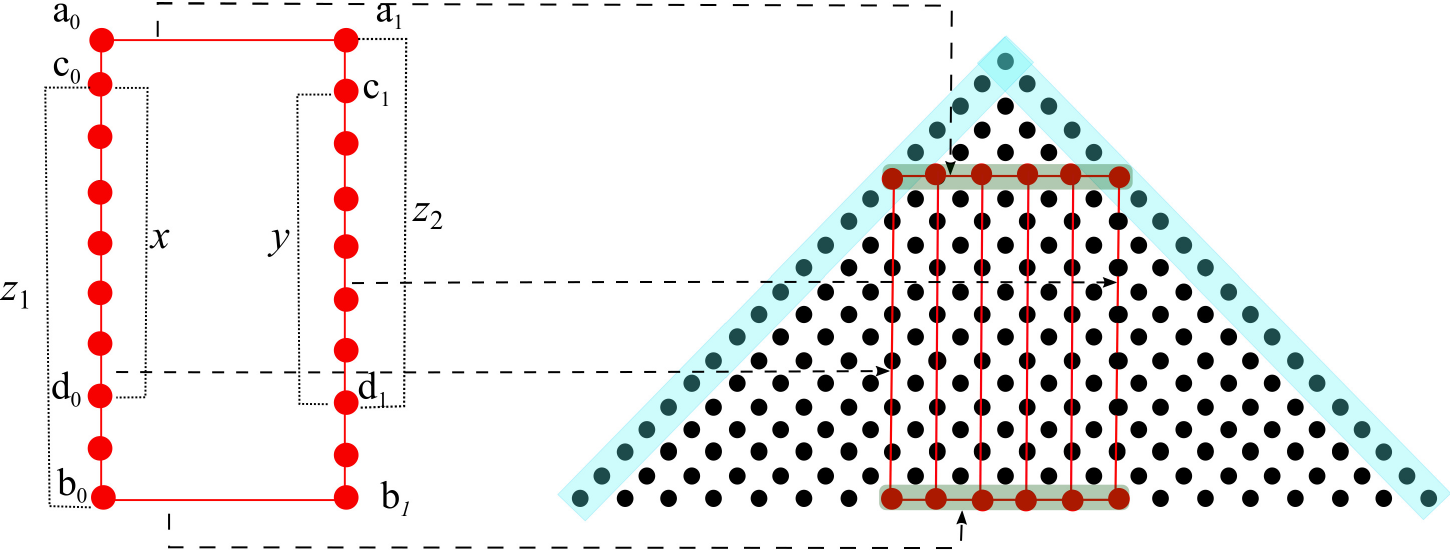}
	\caption{Rungs with odd number of points have the same length.}\label{Proof:Theorem3Part4}
\end{figure}

Proof of parts (4) and (5). Using the coordinate system described in Section \ref{CoordinateSystem} and
Figure \ref{ProofTheorem1Part5Part6} we can see that the points in the first parallel side (the right-up) of the ladder are of the form $F_{k}F_{i}$
where $F_{k}$ is fixed and the points in the second parallel side are of the form $F_{k+j}F_{i}$ where $F_{k+j}$ is fixed.
Therefore, the points in the $r$-th rungs are $F_{k}F_{r}$, $F_{k+1}F_{r}$, $F_{k+2}F_{r}, \dots$,  $F_{k+j}F_{r}$.
To prove Part (4) we first note that the length of any rung is given by $F_{k+j}F_{r}-F_{k}F_{r}=F_{r}(F_{k+j}-F_{k})$
where $F_{k+j}-F_{k}$ is the length of the first rung. The proof of Part (5) follows by adding the points of a rung. Thus,
\[F_{k}F_{r}+F_{k+1}F_{r}+F_{k+2}F_{r}+ \dots +F_{k+j}F_{r}=F_{r}\left(F_{k}+F_{k+1}+F_{k+2}+ \dots + F_{k+j}\right).\]
Note that $\left(F_{k}+F_{k+1}+F_{k+2}+ \dots +  F_{k+j}\right)$ is the sum of points in the second rung.
\end{proof}

\begin{figure}[!ht]
	\centering
	\includegraphics[width=9cm]{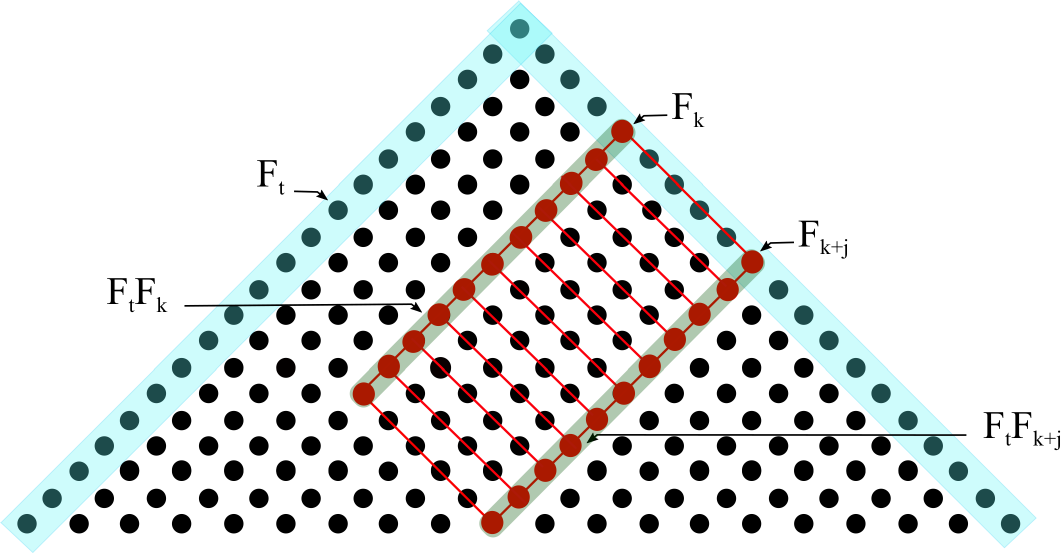}
	\caption{The sum of the points of the rungs are proportionally related.}\label{ProofTheorem1Part5Part6}
\end{figure}

In the next part we give a geometric interpretation in the Hosoya triangle of the Cassini, Catalan, and Johnson identities (see \cite{JohnsonC}).

\begin{figure}[!ht]
	\centering
	\includegraphics[scale=0.55]{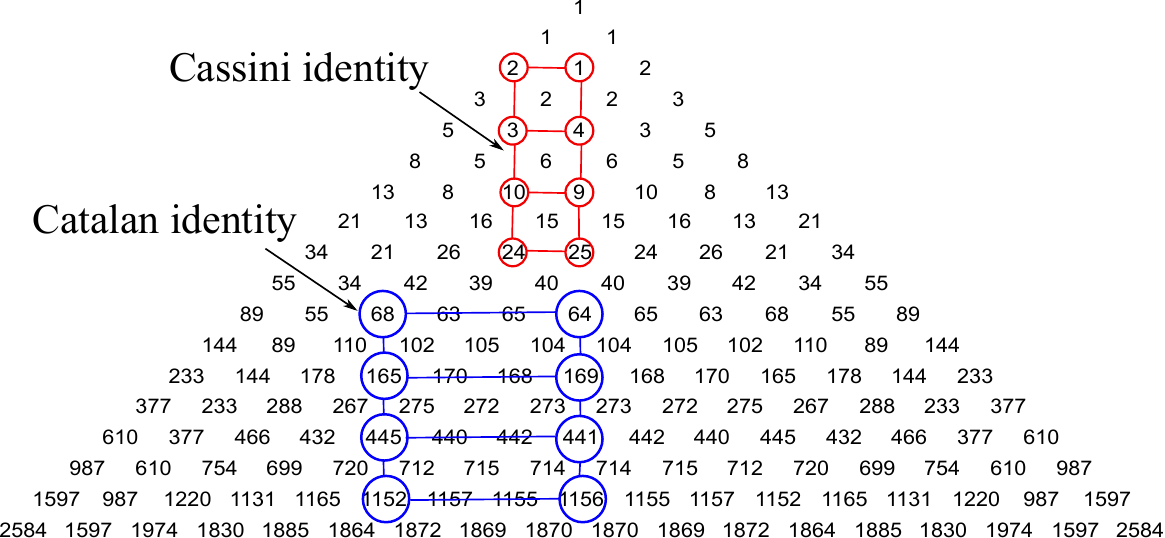}
	\caption{Geometry of the Cassini and Catalan identities.} \label{CasiniCatalan}
\end{figure}

The length of a rung in a vertical ladder in $\mathcal{H}$ gives rise to the Cassini identity, if one of the uprights
is located in a central vertical line of $\mathcal{H}$ and the rung has exactly two points (see Figure \ref{CasiniCatalan}). Thus,
$$
H(2k,k)-H(2k,k-1)=(-1)^{k-1}.
$$
The same type of ladder as above also gives rise to the Catalan identity. Thus,
$$
H(2k,k)-H(2k,k-j)=(-1)^{k-j}H(2j,j).
$$

The length of a rung in a vertical ladder in $\mathcal{H}$ gives rise to the d'Ocagne identity if one of them is
located on the top of the ladder, and the coordinates of the end points of the second rung are $H(k+j+1,j)$ and  $H(k+j+1,k)$. Thus,
$$
H(k+j+1,k)-H(k+j+1,j)=(-1)^j H(k-j+1,k-j).
$$
The length of a rung in a vertical ladder in $\mathcal{H}$ gives rise to the Johnson identity. Thus, if
$k+j=r+i$ and $i<j$, then  for every $l \le i$ it holds that
\begin{eqnarray*}
	H(k+j,j)-H(r+i,i)&=&(-1)^{l}(H(k+j-2l,j-l)-H(r+i-2l,i-l))\\
	&=&(-1)^{i}H(k+j-2i,j-i).
\end{eqnarray*}

As a corollary of Theorem \ref{Propiedad:del:cuadrado} Part (3)
we have that if the points $a_{i}$ and $b_{i}$ in the Hosoya triangle are as in Figure \ref{ConfigurationsZigzag}(e), then
$\left(a_{j}+a_{j+1}\right) -\left(b_{j}+b_{j+1}\right)$ is a constant. This property is analogous to a property in Pascal's
triangle that yields the Catalan numbers (see \cite{VanBilliard}).

\begin{proposition}\label{Prop:zigzag}
		Let $a, b, j$ be positive integers with $j\le \min\{a,b \}$. If $A(F_{a-j},F_{a+j})$, $B(F_{b-j},F_{b+j})$ and $C(F_{a},F_{b})$ are points in the Cartesian plane, then
		\begin{enumerate}
		\item the line passing through  $A$ and $B$ is parallel to the line passing through $(0,0)$ and $C$. Thus,
		
		$$ \dfrac{F_a}{F_b}=\dfrac{F_{a-j} +(-1)^j F_{a+j}}{F_{b-j} +(-1)^j F_{b+j}}.$$

		\item The triangle with base $F_{a+j}+(-1)^jF_{a-j}$ and height  $F_{b}$ has same area as the triangle with base $F_{b+j}+(-1)^jF_{b-j}$ and height  $F_a$.
		\end{enumerate}
\end{proposition}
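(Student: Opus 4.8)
The plan is to reduce both parts to a single Fibonacci identity and then supply two geometric readings of it. Writing the base lengths as $F_{a+j}-F_{a-j}$ and $F_{b+j}-F_{b-j}$ (both nonnegative, since $0\le a-j$, $0\le b-j$ and the Fibonacci sequence is nondecreasing), the equal-area assertion in part~(2) is, after cancelling the factor $1/2$,
\[
(F_{a+j}-F_{a-j})\,F_b=(F_{b+j}-F_{b-j})\,F_a ,
\]
and cross-multiplying the proportion in part~(1) yields exactly this same equation. Hence it suffices to prove this one identity; part~(1) then reads off as the equality of the (inverse) slopes of the line through $(0,0)$ and $C=(F_a,F_b)$ and the line through the points with coordinates $(F_{a-j},F_{b-j})$ and $(F_{a+j},F_{b+j})$, while part~(2) is the statement that $\tfrac12\,\text{base}\times\text{height}$ agrees for the two triangles.

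First I would establish the auxiliary identity that, for odd $j$,
\[
F_{n+j}-F_{n-j}=(F_{j-1}+F_{j+1})\,F_n .
\]
This follows from the addition formula $F_{n+j}=F_nF_{j+1}+F_{n-1}F_j$ together with the reflection $F_{n-j}=(-1)^{j}\!\left(F_nF_{j-1}-F_{n-1}F_j\right)$: subtracting and using $(-1)^j=-1$ makes the two $F_{n-1}F_j$ terms cancel, leaving $F_n(F_{j+1}+F_{j-1})$. (Equivalently one may use the Binet form, where the bracket is the Lucas number $L_j$.) Applying this with $n=a$ and with $n=b$, the common factor $F_{j+1}+F_{j-1}$ pulls out, giving
\[
F_a\,(F_{b+j}-F_{b-j})=F_aF_b\,(F_{j+1}+F_{j-1})=F_b\,(F_{a+j}-F_{a-j}),
\]
which is precisely the displayed identity, and hence both parts.

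The computations are short, so the real care lies in two places, and these are what I would flag as the main obstacles. The first is a parity hypothesis: the cancellation above needs $j$ odd, and for even $j$ the reflection formula instead yields $F_{n+j}-F_{n-j}=F_j\,(F_{n-1}+F_{n+1})$, so the two sides become $F_aF_j(F_{b-1}+F_{b+1})$ and $F_bF_j(F_{a-1}+F_{a+1})$, unequal in general (e.g.\ $a=3,b=4,j=2$ gives $14\ne12$); thus the proposition should be read with $j$ odd, in line with part~(3) of Theorem~\ref{Propiedad:del:cuadrado} and the Catalan-type context that motivates it. The second point of care is the pairing of coordinates in part~(1): the proportion as displayed is the parallelism of $OC$ with the line through $(F_{a-j},F_{b-j})$ and $(F_{a+j},F_{b+j})$, so I would take the two moving points to carry matching index shifts in the two coordinates. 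Once the correct pair of points is fixed and $j$ is taken odd, the whole proposition is the one identity above, read twice.
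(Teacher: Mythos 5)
Your proof is correct, and both defects you flag in the statement are real, but your route is not the one the paper intends. The paper in fact gives no proof at all: it says only that Proposition \ref{Prop:zigzag} ``is easy using Proposition \ref{PropiedadDelRectangle}'' (the Rectangle Property). That intended argument runs as follows: the four products $F_bF_{a-j}$, $F_aF_{b-j}$, $F_aF_{b+j}$, $F_bF_{a+j}$ are the corners of a vertical ladder in $\mathcal{H}$, namely $H(a+b-j,\,a-j)$ and $H(a+b-j,\,b-j)$ in one row, and $H(a+b+j,\,a)$ and $H(a+b+j,\,b)$ exactly $2j$ rows lower with the columns shifted by $j$; the Rectangle Property with $r=j$ says the top difference equals $(-1)^{j}$ times the bottom difference, and for odd $j$ this rearranges to your identity $F_b\left(F_{a+j}-F_{a-j}\right)=F_a\left(F_{b+j}-F_{b-j}\right)$. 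So the paper's route keeps the argument inside the Hosoya-triangle formalism, which is the theme of the paper, while yours is a self-contained computation from the addition and reflection formulas. What your route buys is transparency: isolating the factor $F_{n+j}-F_{n-j}=L_jF_n$ (odd $j$) versus $F_j L_n$ (even $j$) shows exactly why the parity of $j$ is essential --- a hypothesis the paper never states, even though the same sign $(-1)^j$ sits inside its own Rectangle Property (whose printed exponent $(-1)^{r+1}$ is itself off by one, as the case $r=0$ there shows).

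Two confirmations of your flagged obstacles. Your counterexample $a=3$, $b=4$, $j=2$ is right ($4\cdot 3=12\ne 14=7\cdot 2$), so the proposition is genuinely false for even $j$; restricting to odd $j$ is a necessary repair, not a convenience. Your re-pairing of the coordinates in part (1) is likewise necessary: with $A(F_{a-j},F_{a+j})$ and $B(F_{b-j},F_{b+j})$ as literally printed, parallelism with the line through the origin and $C$ fails even for odd $j$ (take $a=2$, $b=5$, $j=1$: the slope of $AB$ is $3$ while the slope of $OC$ is $5$); the printed pairing agrees with yours only in the special case $b=a+2j$, where $b-j=a+j$ and the two readings coincide. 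With the points read as $(F_{a-j},F_{b-j})$ and $(F_{a+j},F_{b+j})$, parallelism, the displayed proportion, and the area identity of part (2) are all the same statement, exactly as you reduce them.
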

The proof of Proposition \ref{Prop:zigzag} is easy using Proposition \ref{PropiedadDelRectangle}, therefore we omit it.

The configuration depicted in Figure \ref{ConfigurationsZigzag} Part (a) is called a \textit{zigzag}.
The configuration depicted in Figure \ref{ConfigurationsZigzag} Part (b) is called a \textit{left zigzag}.
The configuration depicted in Figure \ref{ConfigurationsZigzag} Part (c) is called a \textit{right zigzag}.
The configuration depicted in Figure \ref{ConfigurationsZigzag} Part (d) is called a \textit{long zigzag}.
There should be a finite number of points in any zigzag configuration.

The configuration depicted in Figure \ref{ConfigurationsBrad} Part (a) is called a \textit{braid (or hourglass)}. 
The configuration depicted in Figure \ref{ConfigurationsBrad} Part (b) is called a \textit{left braid}.
The configuration depicted in Figure \ref{ConfigurationsBrad} Part (c) is called a \textit{right braid}.
There should be a finite number of points in any braid configuration.

\begin{figure}[!ht]
\centering
		\includegraphics[width=10cm]{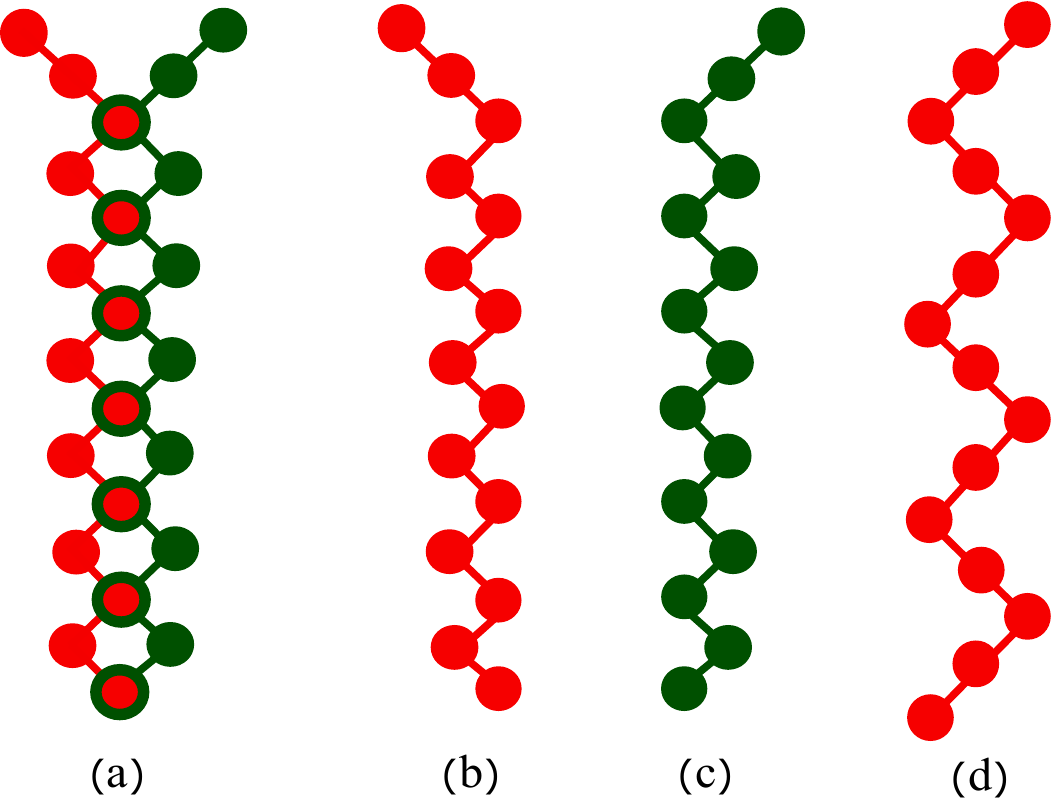}
	\caption{Zigzag configurations.} \label{ConfigurationsZigzag}
\end{figure}

\begin{corollary} \label{cor:6}The sum of alternating points of a  long zigzag configuration in $\mathcal{H}$ starting
	from its second point is equal to the difference of the last point and the first point of the zigzag
	configuration.
	Moreover, any column of points forming a rectangle with the column of alternating
	points has the same sum (see Figure \ref{ZigZagSuma}(a) and Figure \ref{ConfigurationsZigzag} Part (d)).
	More precisely, if $a, b, c$, and $d$ are positive integers such that $a+b=c+d$ and $a\le b$, then for every
	positive integer $k$ it holds that
	\[
	\sum_{j=0}^{2k-1} F_{a+j}F_{b+j}=\sum_{j=0}^{2k-1} F_{c+j}F_{d+j}= F_{b+2k}F_{b+2k-1}-F_{b}F_{a-1}.
	\]
\end{corollary}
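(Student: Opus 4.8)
The plan is to split the claim into its two assertions: first the middle equality $\sum_{j} F_{a+j}F_{c+j}=\sum_{j} F_{b+j}F_{d+j}$, and then the evaluation of this common value in closed form. For the middle equality I would first translate the products into Hosoya entries through Proposition \ref{lemma0}, writing $F_{a+j}F_{c+j}=H(a+c+2j,a+j)$ and $F_{b+j}F_{d+j}=H(b+d+2j,b+j)$. Since $a+c=b+d$, both families sit on the \emph{same} rows $r=a+c+2j$, one on the diagonal through column $a+j$ and the other on the parallel diagonal through column $b+j$; that is, they are the two uprights of a vertical ladder. The equality is then exactly Theorem \ref{Propiedad:del:cuadrado}(2) applied with horizontal shift $m=b-a$ (assume $b\ge a$, otherwise swap the two sides), so no new work is needed. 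Alternatively one can iterate the Rectangle Property (Proposition \ref{PropiedadDelRectangle}) rung by rung.

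Once the two sides agree, the common value $T$ depends only on $S=a+c$ and on $k$, so I would evaluate it for the most central representative diagonal allowed by the ladder shift above. Here the parity of $S=a+c$ governs everything (for the symmetric long zigzag of Figure \ref{ConfigurationsZigzag}(d) one has $b=c$, so this is the parity of $a+b$ and $n_e=\lfloor(a+b)/2\rfloor$). When $S$ is even I slide to the central column $a=c=S/2=n_e$, where the entries become the squares $F_{n_e+j}^2=H(2(n_e+j),n_e+j)$; when $S$ is odd I slide to the adjacent pair of central columns $a=n_e=(S-1)/2$, $c=n_e+1$, where the entries become the products $F_{n_e+j}F_{n_e+1+j}$. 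This is the geometric source of the two cases in the statement.

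It remains to telescope. In the even case each summand splits as $F_m^2=F_mF_{m+1}-F_{m-1}F_m$, and the sum collapses to $F_{n_e+2k}F_{n_e+2k-1}-F_{n_e}F_{n_e-1}$. In the odd case I would use Cassini's identity in the form $F_mF_{m+1}=F_{m+1}^2-F_m^2-(-1)^m$; the square part telescopes to $F_{n_e+2k}^2-F_{n_e}^2$, while the alternating $(-1)^m$ corrections must vanish. I expect this last cancellation to be the only real obstacle: it works precisely because the rung carries an \emph{even} number $2k$ of points, so the signs pair off, which is exactly the even-number-of-points hypothesis highlighted before the statement. Careful index bookkeeping — keeping the length of the rung ($2k$ summands) consistent between the telescoped endpoints and the stated closed form — is the place where errors are most likely to creep in.

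As an independent cross-check one can instead expand via the product-to-sum identity $5F_mF_n=L_{m+n}-(-1)^nL_{m-n}$, which gives $5T=\sum_{j=0}^{2k-1}L_{S+2j}=L_{S+4k-1}-L_{S-1}$ and hence $T=F_{2k}F_{S+2k-1}$; matching this single expression against the two telescoped forms confirms the closed form and pins down the parity dichotomy. Finally, the qualitative assertion that the alternating sum along the long zigzag, read from its second point, equals the last point minus the first point is just the same telescoping identity read off the picture, so it comes for free once the evaluation above is in place.
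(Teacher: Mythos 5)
Your handling of the middle equality coincides with the paper's: both you and the authors obtain $\sum_{j=0}^{2k-1}F_{a+j}F_{c+j}=\sum_{j=0}^{2k-1}F_{b+j}F_{d+j}$ directly from Theorem \ref{Propiedad:del:cuadrado} part (2). Where you diverge is in everything else, and in the logical direction. The paper's core step is the qualitative zigzag statement, proved geometrically: labeling the zigzag points $p_1,\dots,p_n$ from top to bottom, the defining recursions of $\mathcal{H}$ give $p_3=p_1+p_2$, $p_5=p_3+p_4$, \dots, $p_n=p_{n-2}+p_{n-1}$, and successive substitution collapses $p_2+p_4+\cdots+p_{n-1}$ into $p_n-p_1$; the closed form is then read off by running the zigzag down the center of $\mathcal{H}$, where the first and last points are central entries. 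You instead prove the closed form first, sliding to the central column via part (2) and telescoping algebraically ($F_m^2=F_mF_{m+1}-F_{m-1}F_m$ in one parity, Cassini in the other, with the $(-1)^m$ corrections cancelling because there are $2k$ summands); your Lucas-number cross-check $T=F_{2k}F_{S+2k-1}$ is an independent confirmation the paper does not have. Both routes work: the paper's is position-independent and essentially one line, yours is computational but makes the two parity cases completely explicit.

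Two caveats. First, your closing sentence treats the qualitative assertion (alternating sum equals last point minus first point, for an \emph{arbitrary} long zigzag) as coming ``for free,'' but your telescoping was carried out only on the two central columns; for a zigzag elsewhere in $\mathcal{H}$ you still need the recursion argument above (or another application of Proposition \ref{PropiedadDelRectangle} to match up first and last points), so this part of your write-up is a gap --- and it is exactly the part the paper proves in detail. Second, your derived values carry $2k$ where the statement prints $k$, namely $F_{n_e+2k}^2-F_{n_e}^2$ and $F_{n_e+2k}F_{n_e+2k-1}-F_{n_e}F_{n_e-1}$. You flagged this as bookkeeping to watch, but you should have pushed it to a conclusion: your $2k$ versions are the ones consistent with a $2k$-term sum (already at $a=b=c=d=1$, $k=1$ the printed right-hand side gives $F_2F_1-F_1F_0=1$, while the sum is $F_1^2+F_2^2=2$), so the discrepancy is a typo in the paper's statement --- as is its dependence on $a+b$ rather than $a+c$, which your reading $b=c$ repairs --- and not an error in your computation.
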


\begin{proof}
  We prove that the sum of alternating points of a long zigzag configuration in $\mathcal{H}$
  starting from its second point is equal to the difference of the last point
with the first point of the long zigzag. The last part of this corollary follows from Theorem \ref{Propiedad:del:cuadrado} Part (2).

Suppose
$p_{1}, p_{2}, p_{3}, \dots, p_{n-2}, p_{n-1}, p_{n}$ are the points of the long zigzag ordered from
top to bottom where $p_{1}$ is the first point  on the top and $p_{n}$ is the last
point in the bottom (see Figure \ref{ZigZagSuma}(a)). We  want to show that $p_{2}+ p_{4}+ p_{6}+ \dots+  p_{n-1}= p_{n}-p_{1}$.
From definition of $\mathcal{H}$ on Page~\pageref{CoordinateSystem} we know that $H(r,k)= H(r-1,k)+ H(r-2,k)    \text{ and } H(r,k)= H(r-1,k-1)+H(r-2,k-2)$.
This implies that
\begin{eqnarray}
p_{3}&=&p_{1}+p_{2}, \label{Ecuacion1}\\
p_{5}&=&p_{3}+p_{4}, \label{Ecuacion3}\\
p_{7}&=&p_{5}+p_{6}, \label{Ecuacion5}\\
\vdots && \vdots  \nonumber\\
p_{n}&=&p_{n-2}+p_{n-1}. \label{Ecuacionn}
\end{eqnarray}
Substituting Equation  \eqref{Ecuacion1} into $p_{2}+ p_{4}+ p_{6}+ \dots+  p_{n-1}$ we obtain
\[p_{2}+ p_{4}+ p_{6}+ \dots+  p_{n-1}= -p_{1}+ p_{3}+ p_{4}+ p_{6}+ \dots+  p_{n-1}.\]
Substituting Equation  \eqref{Ecuacion3} into the right-hand side of this equality, we obtain
\[p_{2}+ p_{4}+ p_{6}+ \dots+  p_{n-1}= -p_{1}+ p_{5}+ p_{6}+ \dots+  p_{n-1}.\]
Substituting Equation  \eqref{Ecuacion5} into the right-hand side of this equality, we obtain
\[p_{2}+ p_{4}+ p_{6}+ \dots+  p_{n-1}= -p_{1}+ p_{7}+ p_{8}+ \dots+  p_{n-1}.\]
 We systematically keep making these substitutions to obtain
\[p_{2}+ p_{4}+ p_{6}+ \dots+  p_{n-1}= -p_{1}+  p_{n}.\]
This completes the proof of the corollary.
\end{proof}

The identity in the previous corollary is a generalization of the identities found in Koshy \cite{koshy} and Vajda \cite{Vajda}. 
For example, if $a=b=1$, then we obtain 
$$\displaystyle\sum_{j=0}^{2k-1} F_{1+j}^2=F_{2k}F_{2k+1}.$$ 
In additon, if  $a=1, b=2$, then 
$$\displaystyle\sum_{j=0}^{2k-1}F_{j+1}F_{j+2}=F_{2k+2}F_{2k+1}$$ and so on.

\begin{figure}[!ht]
	\centering
	\includegraphics[width=8cm]{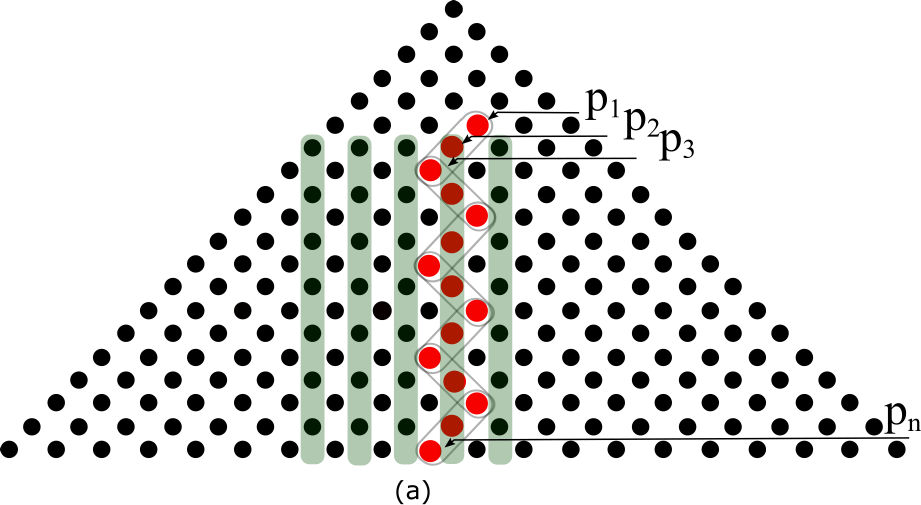}\hspace{0.2cm}	\includegraphics[width=8cm]{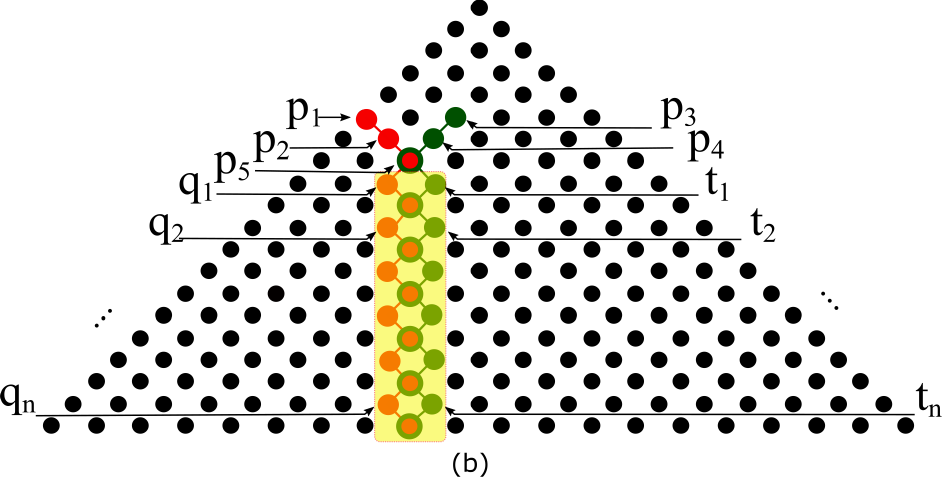}
	\caption{Zigzag Configurations.}
	\label{ZigZagSuma}
\end{figure}

\begin{theorem}[Zigzag property] \label{ZigZagTheorem} If a zigzag configuration with $6k+5$ points holds in $\mathcal{H}$, then the sum of all points in its left zigzag is equal
	to the sum of all points in its right zigzag (see Figure  \ref{ConfigurationsZigzag}  and Figure \ref{ZigZagSuma}(b)).
	\end{theorem}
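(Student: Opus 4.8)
The plan is to reduce this purely geometric assertion to the three–term relation that the definition of $\mathcal{H}$ forces along any zigzag, and then to run the telescoping idea of the preceding corollary while tracking the left and right parts separately. Reading the configuration from top to bottom as $p_{1},p_{2},\dots ,p_{6k+5}$, the recursion $H(r,k)=H(r-1,k)+H(r-2,k)$ (and its mirror) gives, exactly as in the proof of that corollary, the relations $p_{2j+1}=p_{2j-1}+p_{2j}$: the even–indexed points are the middle points of the successive segments, while the odd–indexed points are the turning points, which sit alternately on the left and on the right of the figure. The left zigzag of Figure~\ref{ConfigurationsZigzag}(b) consists of the left turning points together with their adjacent middle points, and the right zigzag of Figure~\ref{ConfigurationsZigzag}(c) is the analogous right subconfiguration; since the two subconfigurations share all of their middle points, these cancel in the difference of the two sums and one is left with
\[
S_{L}-S_{R}=\Bigl(\sum_{\text{left turning points}}p\Bigr)-\Bigl(\sum_{\text{right turning points}}p\Bigr).
\]
Everything therefore reduces to comparing the two alternating families of turning points.

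To control that difference I would argue by induction on $k$, which is exactly what the quantity $6k+5$ invites. The base case $k=0$ is a five–point zigzag and can be checked directly from $H(r,k)=F_{k}F_{r-k}$ in the coordinate system of Section~\ref{CoordinateSystem}. The inductive step lengthens the zigzag by one full period of six points; the period is six because the alternating sign attached to successive turning points has period two while the Fibonacci pattern governing their values has period three, so that a block of $\operatorname{lcm}(2,3)=6$ points is needed to return the configuration to the same phase and the same left/right colouring. The heart of the matter is then a single local statement: \emph{a six–point extension adds the same amount to $S_{L}$ as to $S_{R}$}. I would prove this by applying the Rectangle Property (Proposition~\ref{PropiedadDelRectangle}) to each newly created turning point together with the facing turning point of the previous block; each such four–corner comparison shows that the new left and new right contributions differ from the old ones by the same signed Fibonacci term $(-1)^{\bullet}F_{i}F_{\bullet}$, and across a complete period of six these signed terms occur in cancelling pairs.

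A second, more economical route that I would develop in parallel is to recognise the left zigzag and the right zigzag as two genuine long zigzags in the sense of the preceding corollary and to apply that corollary to each. This instantly expresses the alternating (even–position) part of each side as its last point minus its first point, after which the remaining odd–position contributions are matched term by term using the Catalan and d'Ocagne identities obtained in this section from Theorem~\ref{Propiedad:del:cuadrado}. Here too the hypothesis $6k+5$ is decisive: it guarantees that the left and right long zigzags contain the same number of points and that the two endpoint terms produced by the corollary coincide, so that after cancellation $S_{L}=S_{R}$.

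The step I expect to be the main obstacle is precisely this bookkeeping of endpoints. Telescoping the interior of each sub-zigzag is routine once the three–term relation is in hand, but pairing the two extreme turning points of the left zigzag against those of the right zigzag, and verifying that the residue hidden in $6k+5$ (rather than $6k+1$ or $6k+3$) is exactly what annihilates the leftover signed Fibonacci term, is the delicate point and is where the local use of Proposition~\ref{PropiedadDelRectangle} together with the Catalan identity must be applied with care. Once that boundary cancellation is secured, both the inductive argument and the corollary-based argument close at once, yielding $S_{L}=S_{R}$.
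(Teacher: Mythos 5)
Your proposal does not close the argument: both of your routes defer exactly the step that constitutes the theorem. In the first route, the claim that \emph{a six--point extension adds the same amount to $S_L$ as to $S_R$} is asserted to follow from Proposition~\ref{PropiedadDelRectangle} ``in cancelling pairs,'' but you never exhibit which four corners are being compared or why the signed terms cancel; you then concede that the boundary/endpoint cancellation is the delicate point and leave it open. In the second route, the left and right zigzags of Figure~\ref{ConfigurationsZigzag}(b),(c) are not long zigzags in the sense of Figure~\ref{ConfigurationsZigzag}(d) (the paper names them as distinct configurations precisely because they are not), so the preceding corollary cannot be applied to them wholesale. There are also structural misreadings: the left and right zigzags do not share ``all of their middle points'' (the theorem compares two essentially disjoint halves of the configuration, so nothing cancels in $S_L-S_R$ for free), and the justification of the period $6$ via $\operatorname{lcm}(2,3)$ is numerology rather than a property of $\mathcal{H}$.

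The paper's proof is much shorter and rests on a tool you never invoke: Theorem~\ref{Propiedad:del:cuadrado} part (2), the statement that vertical rungs with an \emph{even} number of points in a horizontal ladder all have the same sum. The decomposition is: (i) the top five points of the zigzag satisfy $p_1+p_2=p_3+p_4=p_5$ directly from the recursive definition of $\mathcal{H}$, which balances the apex between the two sides; (ii) the remaining $6k$ points fall into three vertical sets of $2k$ points each, and since $2k$ is even, part (2) of Theorem~\ref{Propiedad:del:cuadrado} applies to equate the relevant sums $q_1+\cdots+q_n=t_1+\cdots+t_n$. This also explains the hypothesis $6k+5$ transparently --- $5$ for the apex relation, $6k$ so that each vertical set has even cardinality --- whereas in your scheme the role of the residue $5$ is exactly the unproven obstacle. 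If you want to salvage your inductive plan, the lemma you need for the six-point extension step is precisely Theorem~\ref{Propiedad:del:cuadrado}(2) applied to the new even-length vertical blocks, at which point the induction collapses into the paper's direct argument.
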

		
	\begin{proof} From Figure \ref{ZigZagSuma}(b) and definition of $\mathcal{H}$ on Page~\pageref{CoordinateSystem}
	it is easy to see that $p_{1}+p_{2}=p_{3}+p_{4}=p_{5}$. Since the zigzag configuration has $6k+5$ points,
	there remain $6k$ points distributed in three vertical equal sets (see Figure \ref{ZigZagSuma}(b) for the labeling of points).
	So, every set has an even number of points. This and Theorem \ref{Propiedad:del:cuadrado} Part (2) imply that
	$q_{1}+q_{1}+\dots+ q_{n} = t_{1}+t_{1}+ \dots+ t_{n}$. This completes the proof.      	
	\end{proof}

As a corollary of Theorem \ref{ZigZagTheorem} we can see that the Hockey Stick property seen in Figure \ref{HockeyStickFig}(a) and originally found in the Pascal triangle (see \cite{BlairRigoAntaraGP,VanBilliard}), also holds in $\mathcal{H}$.
 Thus, the sum of all points in the shaft of a hockey stick is equal
to the point on the blade of the hockey stick. The blade of the hockey stick is going to the left or to the right depending on the numbers
of points that are on the shaft. If we consider the hockey stick configuration on one side of the Hosoya triangle ($\mathcal{H}$),
then (by the symmetry of $\mathcal{H}$) when the same configuration is represented on the other side of $\mathcal{H}$, the blade of
the hockey stick changes direction (from left to right or from right to left).  We use $s_{1}, s_{2}, \cdots, s_{i}$ to represent the points on the shaft of the hockey stick. We use $b_{L}$ and $b_{R}$ to
represent the point on the blade of the hockey stick. We use $b_{L}$ to indicate that it is on the left-hand side of the hockey stick and $b_{R}$
indicates that it is on right side of the hockey stick. (See Figure \ref{HockeyStickFig}(a).) These give that if $s_{1}, s_{2}, \cdots, s_{i}$ 
are the points on the shaft of the hockey stick where $s_{1}$ is a point on one of the edges of $\mathcal{H}$ and $b_{t}$ is the point on the
blade of the hockey stick, with $t \in \{L, R\}$ (see Figure \ref{HockeyStickFig}(a)), then 

\begin{enumerate}
  \item If the hockey stick is on the left-hand side of $\mathcal{H}$, then $s_{1}+s_{2}+ \cdots+s_{2n}=b_{L}$, or if the hockey stick is on the right-hand side of $\mathcal{H}$, then $s_{1}+s_{2}+ \cdots+s_{2n}=b_{R}$. Therefore, we obtain this identity      
 \[\sum _{i=0}^{k-1} F_{i+1} F_{i+r-1}=F_k F_{k+r-1}.\]
  
  \item If the hockey stick is on the left-hand side of $\mathcal{H}$, then  $s_{1}+s_{2}+ \cdots+s_{2n+1}=b_{R}$, or if the hockey stick is on the 
  right-hand side of $\mathcal{H}$,
  then $s_{1}+s_{2}+ \cdots+s_{2n+1}=b_{L}$. Therefore, we obtain this identity
 \[\sum _{i=0}^{k-1} F_{i+1} F_{i+r-1}=F_{k+1} F_{k+r-2}.\]
 
  \item If the hockey stick is in the center of $\mathcal{H}$, then $s_{1}+s_{2}+ \cdots+s_{i}=b_{L}=b_{R}$. Therefore, for every $i$ this holds 
   \[ \sum _{i=0}^{k-1} F_{i+1}^2= F_{k}F_{k+1}.\]
  
\end{enumerate}
The first two of the three identities above are generalizations of identities like 
$\sum _{i=0}^{k-1} F_{i+1} F_{i}=F_k^2 $ 
if we have $r=1$, $\sum _{i=0}^{k-1} F_{i+1} ^2=F_{k+1} F_{k}$ if we have $r=2$) and so on.
 The third identity (above), is well-known and can be found in \cite{koshy}.
 
We now formally state the previous results in the following corollary. 

\begin{corollary}[Hockey Stick property] \label{HockeyStickCor} Let $r,k\in \mathbb{Z}_{> 0}$, where $k$ is the number of entries in the shaft of the hockey stick. Then these hold in $\mathcal{H}$.

\begin{enumerate}
\item If $k$ is even and the hockey stick is on  left-hand side of the median of $\mathcal{H}$, then 
\[\sum _{i=0}^{k-1}H(r+2i,i+1)=H(r+2k-1,k).\]
\item If $k$ is even and the hockey stick is on right-hand side of the median of $\mathcal{H}$, then
\[\sum _{i=0}^{k-1}H(r+2i,r+i-1)=H(r+2k-1,r+k-1).\]
\item If $k$ is odd and the hockey stick  is on  left-hand side of the median of $\mathcal{H}$, then
\[\sum _{i=0}^{k-1}H(r+2i,i+1)=H(r+2k-1,k+1).\]

\item If $k$ is odd and the hockey stick is on right-hand side of the median of $\mathcal{H}$, then
\[\sum _{i=0}^{k-1}H(r+2i,r+i-1)=H(r+2k-1,r+k-2).\]

\item If the hockey stick is in the median, then for $k>0$ this holds
\[\sum _{i=0}^{k-1}H(2i+2,i+1)=H(2k+1,k)=H(2k+1,k+1).\]

\end{enumerate}

\end{corollary}
\begin{figure}[!ht]
\centering
		\includegraphics[width=8cm]{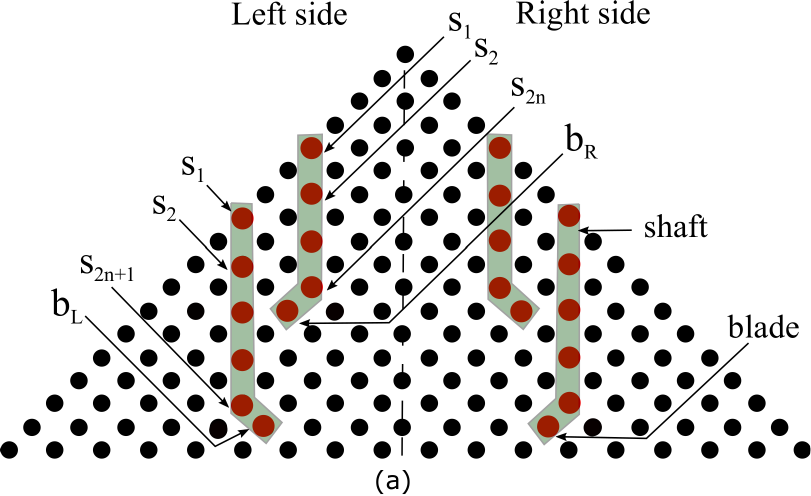}\hspace{0.25cm}\includegraphics[width=8.2cm]{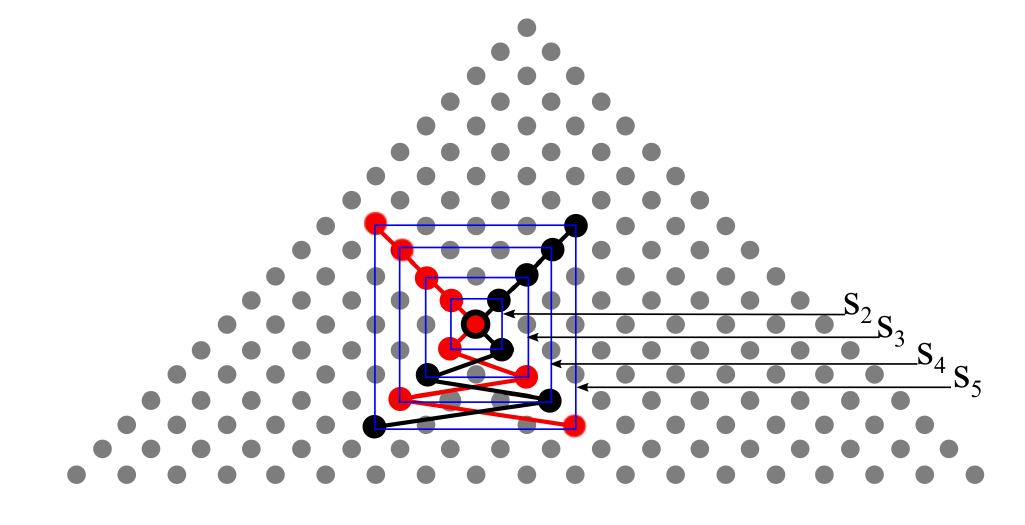}
	\caption{Hockey Stick and Braid configurations.} \label{HockeyStickFig}
\end{figure}

\begin{proposition}[Braid property] If the braid configuration holds in $\mathcal{H}$,
then the sum of all points in the left braid is equal to the sum of all points in
the right braid (see Figure \ref{ConfigurationsBrad} and Figure \ref{HockeyStickFig}(b))
\end{proposition}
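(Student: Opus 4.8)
The plan is to follow the template of the proof of the Zigzag Theorem (Theorem~\ref{ZigZagTheorem}). First I would fix a coordinate description of the braid using the system of Section~\ref{CoordinateSystem}, labeling the points of the left braid and of the right braid so that each carries an explicit value $H(r,k)=F_{k}F_{r-k}$. The feature that distinguishes a braid from a single zigzag is that its two strands cross one another repeatedly; at each crossing the recursion $H(r,k)=H(r-1,k)+H(r-2,k)$ together with its slash counterpart $H(r,k)=H(r-1,k-1)+H(r-2,k-2)$ expresses the crossing point in terms of the two points immediately above it on the opposite strands.

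Next I would isolate a small number of base points near the top of the configuration and show, exactly as the zigzag proof establishes $p_{1}+p_{2}=p_{3}+p_{4}=p_{5}$, that these base points contribute equally to the left and right braids. The remaining points should then partition into vertical ladders, each containing an \emph{even} number of points. Applying Theorem~\ref{Propiedad:del:cuadrado} part~(2), which states that two vertical rungs with an even number of points have equal sums, the corresponding column-sums coming from the left strand match those coming from the right strand. Adding these column identities to the base identities gives the equality of the two braid sums.

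The main obstacle will be the combinatorial bookkeeping at the crossings. Because the two strands interchange position several times, I must check that after deleting the base points the leftover points genuinely split into vertical sets of even cardinality, and that the horizontal shift in Theorem~\ref{Propiedad:del:cuadrado} part~(2) pairs left-strand columns with right-strand columns correctly at every crossing. If the braid is symmetric about the central vertical line of $\mathcal{H}$ the bookkeeping is trivial, since the reflection symmetry $H(r,k)=H(r,r-k)$ immediately matches each left-braid point with an equal right-braid point; but for an off-center braid the even-count decomposition is indispensable, and establishing it is the crux of the argument.
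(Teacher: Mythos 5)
There is a genuine gap, and it sits exactly at the step you call the crux. The braid's points do not lie on vertical lines of $\mathcal{H}$, so the decomposition into ``vertical ladders, each containing an even number of points'' that you need in order to invoke Theorem \ref{Propiedad:del:cuadrado} part (2) does not exist. The zigzag template transfers to zigzags because two consecutive zigzag steps (down-left, then down-right) land directly below the starting point, so the alternate points of a zigzag genuinely split into vertical sets; a braid strand has no such structure. Concretely, the two strands occupy the corners of a family of nested squares $S_{2},S_{3},S_{4},\dots$ centered at the common crossing point, say $H(n,m)$, where $S_{k}$ has corners $H(n-k,m-k)$, $H(n-k,m)$, $H(n+k,m)$, $H(n+k,m+k)$. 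Above the center a strand runs along an oblique diagonal (one point in every consecutive row), and below the center it alternates between the down-left and down-right corners of successive squares; in neither region do two strand points differ by two rows \emph{and} one position, which is what ``vertical'' means in $\mathcal{H}$. So part (2) never applies, however you group the points, and your reflection-symmetry shortcut only covers the special braid centered on the middle line of $\mathcal{H}$.

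The idea you are missing is to abandon vertical ladders and work square by square with the Rectangle Property (Proposition \ref{PropiedadDelRectangle}), which is what the paper does: applied to the top and bottom sides of $S_{k}$ it gives
\[
H(n-k,m-k)-H(n-k,m)=(-1)^{k+1}\bigl(H(n+k,m)-H(n+k,m+k)\bigr).
\]
For odd $k$ the sign is $+$, so the backslash-diagonal corners of $S_{k}$ have the same sum as the slash-diagonal corners; for even $k$ the sign is $-$, so the two left corners have the same sum as the two right corners. This parity dichotomy is precisely what matches the weaving of the strands --- the left braid consists of the left corners of the even squares and the backslash diagonals of the odd squares --- so each square contributes equal amounts to the two braids, and summing over all squares (the shared center point contributing equally by default) proves the proposition. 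Your proposal has no mechanism to track this alternating sign, which is where the crossings actually enter the argument.
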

	
\begin{proof} In Figure \ref{HockeyStickFig}(b) we observe that the left braid configuration is formed
by all left corner points of the squares of even side length  and all corner points in backslash
diagonal in the squares of odd length
(see for example $S_{2}, S_{3},S_{4}$, and $S_{5}$ in Figure \ref{HockeyStickFig}(b)).
The right braid configuration is formed, similarly, by all right corner points of the squares of even side length
and all points in the slash diagonal in the  squares of odd side length.

From Proposition \ref{PropiedadDelRectangle} it is easy to deduce that in a square configuration
in $\mathcal{H}$ with even side length, it holds that the sum of two vertical corner points is equal
to the sum of the remaining corner points. If the square configuration has odd side length,
then it holds that the sum of two corner points in a diagonal of the square is equal to the
sum of the remaining corner points in the other diagonal. Using this property and
Figure \ref{HockeyStickFig}(b) we can see that the sum of left corner points of the innermost
square $S_{2}$ is equal to the sum of its right corner points. We now observe that the
square $S_{3}$ has odd side length. Therefore, the sum of the corner points in the slash
diagonal equals the corner points in the backslash diagonal. The square $S_{4}$ satisfies
the property that the sum of the vertical left corner points equals the sum of the right corner points.
The square $S_{5}$ satisfies the property that the sum of the corner points in the slash diagonal
equals the sum of the remaining corner points in the backslash diagonal. We can continue this process
inductively as long as it is required by the braid configuration embedded in $\mathcal{H}$.
From this, Figure \ref{HockeyStickFig}(b), and the observation given in the first paragraph
it is easy to obtain the conclusion of the proposition.
\end{proof}	

\begin{figure}[!ht]
\centering
		\includegraphics[width=10cm]{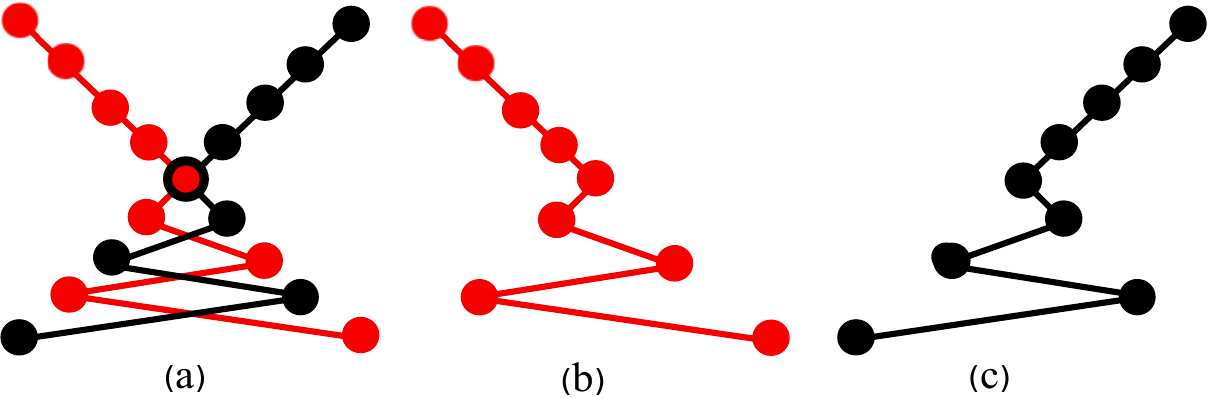}
	\caption{Braid configurations.} \label{ConfigurationsBrad}
\end{figure}

For any square $S$ in Figure \ref{HockeyStickFig}(b) it holds that the sum of the corner points in one diagonal of $S$ is the additive inverse
of the sum of the corner points in  the remaining diagonal of $S$. Therefore, using all squares in Figure \ref{HockeyStickFig}(b) it holds that
$$
\sum_{k=0}^{l} H(n-k,m-k)+\sum_{k=1}^{l} (-1)^k H(n+k,m+k)
=\sum_{k=0}^{l} H(n-k,m)+\sum_{k=1}^{l} (-1)^k H(n+k,m).
$$
If in Figure \ref{HockeyStickFig}(b) we eliminate the common point  ---the point that is  the intersection of left braid and right braid--- we obtain
$$
F_{n-m}\sum_{k=1}^{l} (F_{m-k}+(-1)^kF_{m+k})=F_{m}\sum_{k=1}^l(F_{n-m-k}+(-1)^k F_{n-m+k} ).
$$
This with $r=n-m$ implies
$$
\sum_{k=1}^{l} \frac{F_{m-k}+(-1)^kF_{m+k}}{F_{m}}=\sum_{k=1}^l\frac{F_{r-k}+(-1)^k F_{r+k}}{F_{r}}.
$$

Therefore we have the following corollary.

\begin{corollary}
If $m,k$ are positive integers then,
$$
\sum_{k=1}^{l} \frac{F_{m-k}+(-1)^kF_{m+k}}{F_{m}}=
\begin{cases}
-(F_{l}+F_{l-2}-1),& \mbox{ if  $l$ is odd;}\\
5F_{l-1}F_{l}+1+(-1)^{l+1}, & \mbox{if $l$ is even}
\end{cases}
$$

\end{corollary}

This is a previously unknown identity.

\section{Properties of the Pascal triangle that extend to the Hosoya triangle}
In this section we extend a few properties from the Pascal triangle to the Hosoya triangle.
These properties of the Pascal triangle may be found in \cite{Green}.

If we construct an oblique (backslash) ladder with horizontal rungs of length two
(see Figure \ref{ConfigurationGeneralizedFibonacci}), then the ladder gives rise to
generalized Fibonacci numbers. Thus, adding the two points of each rung of the ladder
gives rise to a sequence of second order which is a generalized Fibonacci sequence.
Recall that the generalized Fibonacci sequence is given by
$G_n=G_{n-1}+G_{n-2}$ with $G_1=a$ and $G_2=b$. Here we note that $a$ and $b$ are the points
in the first rung of the oblique ladder, (see Figure \ref{ConfigurationGeneralizedFibonacci}).
In particular, $a$ and $b$ are consecutive Fibonacci numbers with $a>b$. Some of the ladders
give rise to certain sequences found in Sloane \cite{sloane}. In particular, with $a=1$ and $b=1$
we obtain the Fibonacci number sequence  \seqnum{A000045}. If $a=2$ and $b=1$, we obtain the
Lucas number sequence \seqnum{A000032}. If $a=3$, $b=2$, we obtain sequence \seqnum{A013655}
which is a sequence where each term is obtained by adding a Fibonacci and a Lucas number.
The sequence \seqnum{A206610} is obtained with $a=13$ and $b=8$.

\begin{figure}[!ht]
\centering
		\includegraphics[width=160mm]{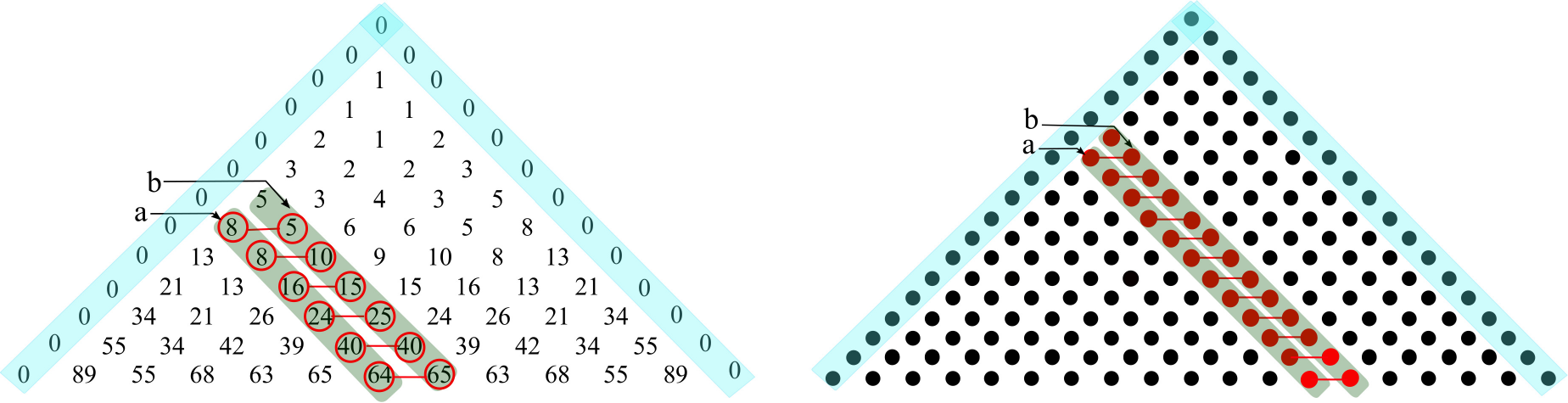}
	\caption{Generalized Fibonacci.} \label{ConfigurationGeneralizedFibonacci}
\end{figure}

If we consider two consecutive rungs of an oblique ladder ---that has horizontal rungs--- we obtain a \emph{rhombus property} (see Figure \ref{Tsticks}(b)).  
This property is an extension of a similar property found in the Pascal triangle (see \cite{Green}). Thus, the rhombus property states  
that the differences of a cross multiplication is always a point of the triangle. That is, the cross multiplication
is the determinant of the numbers present in the rungs of the oblique ladder (see Figure \ref{Tsticks}(b)). Formally we have this proposition. 

\begin{proposition}\label{RhombusPro} 
\begin{enumerate} 
\item Slash ladder. For a fixed positive integer $r$ and for any $n\ge 1$ it holds that

\[
\begin{vmatrix}
H(n,r) & H(n,r+1) \\
H(n+1,r) & H(n+1,r+1) \end{vmatrix}
= (-1)^{n-r+1}F_rF_{r+1}.
\]
\item Backslash ladder. For fixed positive integers $r$ and $n$ and for any $i\ge1$ it holds that
\[
\begin{vmatrix}
 H(n+i,r+i) & H(n+i,r+i+1) \\
 H(n+i+1,r+i+1) & H(n+r+1,r+i+2) \\
\end{vmatrix}
=
(-1)^{r+i+1} F_{n-r-1} F_{n-r}
\] 
\end{enumerate} 
\end{proposition}

\begin{proof} We prove Part (1), the proof of part (2) is similar and we omit it. We observe that the points of two consecutive rungs of a slash ladder are given by $H(n,r)$, $H(n,r+1)$ and $H(n+1,r)$, $H(n+1,r+1)$. These points give rise to a rhombus. Subtracting the product of the diagonal points of the rhombus gives $H(n,r)H(n+1,r+1)- H(n,r+1)H(n+1,r)$. This, the definition of $H(n,r)$, and the Cassini identity implies that

\begin{align*}
H(n,r)H(n+1,r+1)- H(n,r+1)H(n+1,r)&=F_rF_{n-r}F_{r+1}F_{n-r}-F_{r+1}F_{n-r-1}F_{r}F_{n+1-r}\\
						    &=F_rF_{r+1}(F_{n-r}^2-F_{n-r-1}F_{n-r+1})\\ 
						    &=F_rF_{r+1}(-1)^{n-r+1}.
\end{align*}
This completes the proof. 
\end{proof}

An additional configuration that yields a geometry and an identity that we can explore is  a \emph{triangle configuration}
 seen in Figure \ref{Tsticks}(a).

If we take the triangle configuration as in Figure \ref{Tsticks} Part (a) then $a+b-c$ is a Fibonacci number.
Note that $a$ and $b$ are points constituting the top oblique side of the triangle and the points $b$ and $c$
are points along the same vertical line at a distance two from each other. So, if
$a=H(n+1,r-1)$, $b=H(n,r)$, and $c=H(n+2,r+1)$, then $a+b-c= (-1)^{r-n+1} F_{2 r-n}$. This can be stated as the following proposition.

\begin{proposition}\label{TringlePro} $a+b-c=(-1)^rH(n-2r+2,2)$
\end{proposition}

\begin{proof} Since $a=H(n+1,r-1)$, $b=H(n,r)$, and $c=H(n+2,r+1)$, we have 
\begin{align*}
a+b-c&=a+b-(H(n+3,r+1)-H(n+1,r+1))\\
	&=b+H(n+1,r+1)+a-H(n+3,r+1)\\
	&=H(n+2,r+2)-H(n+2,r).						    
\end{align*}
This and Proposition \ref{PropiedadDelRectangle}, imply that $a+b-c=(-1)^rH(n-2r+2,2)$.
\end{proof}

\begin{figure}[!ht]
	\centering
	\includegraphics[width=16cm]{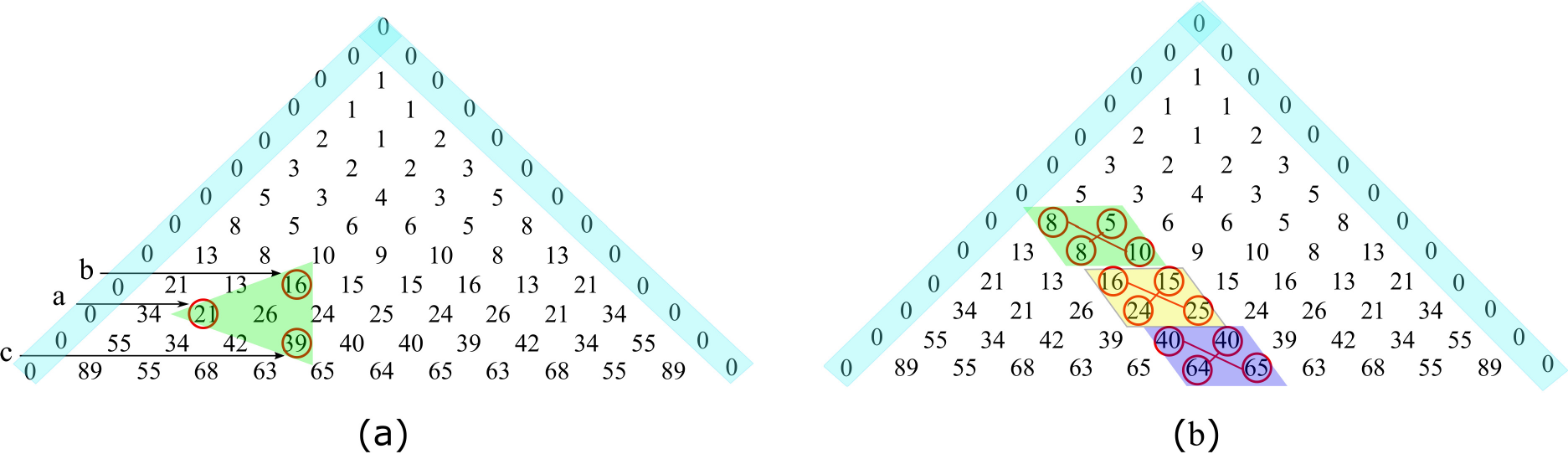}
	\caption{Triangle and rhombus  properties.} \label{Tsticks}
\end{figure}

\section{Acknowledgement}
	
The first and last authors were partially supported by The Citadel Foundation.

\bigskip
\hrule
\bigskip
	
\noindent 2010 {\it Mathematics Subject Classification}:
Primary 11B39; Secondary 11B83.
	
\noindent \emph{Keywords: }
Fibonacci numbers, Hosoya triangle,  rectangle property, zigzag property, braid property, Cassini and Catalan identities.
	
\end{document}